\documentclass{amsart}
\numberwithin{equation}{section}
\theoremstyle{definition}
\newtheorem{thm}{Theorem}[section]
\newtheorem{prop}[thm]{Proposition}

\newtheorem{lem}[thm]{Lemma}
\newtheorem{rem}[thm]{Remark}
\newtheorem{cor}[thm]{Corollary}

\newtheorem*{ack}{Acknowledgments}
\newtheorem{mt}{Main Theorem}
\def\Aut{\mathop{\mathrm{Aut}}\nolimits}

\def\Proj{\mathop{\mathrm{Proj}}\nolimits}
\def\Sym{\mathop{\mathrm{Sym}}\nolimits}

\title[Weighted Galois points for $K3$ surfaces]
{Weighted Galois points for $K3$ surfaces in $\mathbb{P}(1,1,1, 3)$}
\author[S.~Taki]{Shingo Taki}
\address{Department of Mathematics, Tokai University,
4-1-1, Kitakaname, Hiratsuka, Kanagawa, 259-1292, Japan}
\email{staki@tokai.ac.jp}
\urladdr{https://taki.sm.u-tokai.ac.jp}
\date{\today}
\subjclass[2020]{Primary 14J70; Secondary 14J28, 14J50, 14N05}
\keywords{Galois point, weighted projective space, $K3$ surface, automorphism}
\dedicatory{}
\thanks{}
\begin{document}

\begin{abstract}
This paper introduces a generalization of classical Galois points by 
extending the ambient space from standard projective spaces to weighted projective spaces. 
Specifically, the study focuses on smooth weighted hypersurfaces of degree 6 in $\mathbb{P}(1,1,1,3)$
 (which are $K3$ surfaces) to determine the defining equations and the number of weighted Galois points. 
 Furthermore, it characterizes these $K3$ surfaces with weighted Galois points in terms of their automorphisms.
\end{abstract}

\maketitle


\section{Introduction}\label{Introduction}
In algebraic geometry, hypersurfaces of a projective space are among the most fundamental objects.
Galois points, defined by point projections and function fields,  
have been defined for such hypersurfaces in projective spaces.
While various authors have studied many generalizations of Galois points, 
there is no known generalization from the perspective of the ambient space.
In this paper, we present a generalization concerning Galois points.  
That is, we extend the ambient space to weighted projective spaces.
Weighted projective spaces have long appeared implicitly in algebraic geometry.
One of the most basic examples is a hyperelliptic curve 
$y^{2} = f_{2g+2}(x)$ viewed as a double cover of $\mathbb{P}^{1}$ branched at $2g+2$ points. 
Such a curve is a weighted hypersurface of degree $2g+2$ in $\mathbb{P}(1,1, g+1)$. 

In this paper, we introduce a weighted analogue of Galois points for
hypersurfaces in weighted projective spaces.  
Roughly speaking, we replace the usual linear projection from a point 
by a weighted coordinate projection, 
i.e., a rational map obtained by forgetting one weighted homogeneous coordinate 
after a suitable weighted projective change of coordinates. 
The precise definition is given in Section \ref{tentohouteishiki}.
This should be regarded as a weighted analogue of the usual projection
from a point in ordinary projective space.

Yoshihara \cite{Yoshihara-GK3} determined defining equations for 
quartic surfaces containing Galois points, and the possible number of such points.
Our first purpose is to establish a weighted analogue of this classification 
for hypersurfaces of degree 6 in $\mathbb{P}(1,1,1,3)$, i.e., 
for double covers of $\mathbb{P}^{2}$ branched along non-singular sextic curves.
In addition to determining the possible numbers of weighted Galois points, 
we show that their existence induces strong restrictions on the defining
equation of the $K3$ surface.

\begin{mt}\label{mt1} 
Let $S$ be a smooth weighted hypersurface of degree 6 in $\mathbb{P}(1,1,1, 3)$ defined by $Y^{2}=F_{6}(X_{0}, X_{1}, X_{2})$.
The following statements hold:
\begin{enumerate}
\item $S$ has at least one weighted Galois point.
\item The number of weighted Galois points in $S$ is either 0 or 1.
If $S$ has such a point then 
$F_{6}(X_{0}, X_{1}, X_{2})=X_{0}^{5}F_{1}(X_{1}, X_{2})+F_{6}(X_{1}, X_{2})$
holds up to projective equivalence.
\item The number of weighted Galois points not included in $S$ is 1, 2 or 4.
If $S$ has two or more such points then 
$F_{6}(X_{0}, X_{1}, X_{2})=X_{0}^{6}+F_{6}(X_{1}, X_{2})$
holds up to projective equivalence.
\end{enumerate}
Here $F_{d}$ is a homogeneous polynomial of degree $d$.
\end{mt}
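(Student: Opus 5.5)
The plan is to work directly from the definition of weighted Galois point given in Section~\ref{tentohouteishiki}. I read it as follows. A point $P\in\mathbb{P}(1,1,1,3)$ is a coordinate point after a weighted change of coordinates. The associated weighted coordinate projection $\pi_P$ forgets that coordinate, and $P$ is \emph{weighted Galois} when $k(S)/\pi_P^{*}k(\text{image})$ is Galois. Only two kinds of centres occur, according to the weight of the forgotten coordinate.

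\textbf{Step 1: weight $3$ and part (1).} Here $P=(0:0:0:1)$, which is not on $S$ since $Y^2=F_6$ fails there. The map $\pi_P\colon S\to\mathbb{P}^2$ is the double cover, of degree $2$ and hence Galois. This gives (1), and it supplies one weighted Galois point outside $S$ for every $S$.

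\textbf{Step 2: weight $1$, normal form.} After a linear change of $X_0,X_1,X_2$ we take $P=(1:0:0:0)$, and we absorb any $Y$-shift $Y\mapsto Y+G_3(X)$ by completing the square. Then $\pi_P\colon S\dashrightarrow\mathbb{P}(1,1,3)$ is given by $(X_1,X_2,Y)$. The extension is generated by $x_0$ subject to $F_6(x_0,x_1,x_2)=y^2$, so its degree equals $\deg_{X_0}F_6$. This degree is $6$ if $P\notin S$ and $\le 5$ if $P\in S$. Smoothness of $S$ at $P$, via the term $X_0^5F_1$ with $F_1\neq0$, forces the degree to be exactly $5$ in the latter case. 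A Tschirnhaus shift $X_0\mapsto X_0+L(X_1,X_2)$ kills the subleading coefficient in $X_0$.

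\textbf{Step 3: Galois condition in terms of automorphisms.} Every element of the Galois group is a birational automorphism of $S$ over the target. Since $S$ is a K3 surface it is minimal, so these are biregular. They preserve the fibration by $\pi_P$ and fix $X_1,X_2,Y$, so they must be of the form $X_0\mapsto \alpha X_0+\beta(X_1,X_2)$.
\begin{itemize}
\item For $P\in S$ the group has prime order $5$. A generator acts by $X_0\mapsto\zeta_5X_0$ after the shift, and invariance of $F_6$ gives $F_6=X_0^5F_1(X_1,X_2)+F_6(X_1,X_2)$.
\item For $P\notin S$ the group has order $6$. Linearity of the action forces every element to be of the form $X_0\mapsto\zeta X_0$, so the group is cyclic $\mu_6$, and $F_6=X_0^6+F_6(X_1,X_2)$.
\end{itemize}
Conversely, these normal forms are visibly Galois.

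\textbf{Step 4: counting.}
\begin{itemize}
\item For inner points, suppose there are two, $P$ and $P'$. Both normal forms $X_0^5F_1+G_6$ and $X_1'^5F_1'+G_6'$ must hold at once. I would use the two order-$5$ automorphisms to show that the fixed-point structure, or the Hessian-type degeneracy, contradicts smoothness of the sextic $F_6=0$. This gives at most one inner point.
\item For outer points, beyond $(0:0:0:1)$, the weight-$1$ outer Galois points correspond to linear forms $\ell$ with $F_6=\ell^6+G_6(\text{complement})$. Such forms are pairwise "orthogonal", i.e. they give independent coordinates. With $\ell=X_0$ fixed, further ones come from decompositions $G_6(X_1,X_2)=\ell_1^6+\ell_2^6$ together with the symmetric roles of the coordinates. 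The achievable totals are $1$, $2$, and $4$ (the Fermat case $X_0^6+X_1^6+X_2^6$), and exactly three of the four cannot occur.
\end{itemize}

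\textbf{Main obstacle.} I expect the hardest part to be the counting in Step 4, especially excluding $3$ outer points and excluding two inner points. The latter requires a careful use of smoothness rather than just the normal form. I would first try direct coefficient comparison of two simultaneous normal forms under a general linear change of variables.
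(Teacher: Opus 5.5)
\textbf{Verdict: the proposal has genuine gaps.} Step 4 is missing its key argument, and Step 2 uses an invalid reduction.

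\textbf{Step 4.} Your Steps 1--3 follow the paper: Proposition~\ref{gm1}, Lemma~\ref{extend}, Proposition~\ref{sgg}, Corollary~\ref{eq2}. (In Step 3 you assert, without proof, the linearity of the action that Lemma~\ref{extend} supplies.) Step 4 carries all the content of the counts in (2) and (3), and it is not an argument.

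``Use the two order-$5$ automorphisms \dots\ fixed-point structure, or Hessian-type degeneracy'' names no mechanism. ``Such forms are pairwise orthogonal'' is exactly what has to be proved: the centre of each homology lies on the axis of the other, i.e.\ the two cyclic Galois groups commute and can be diagonalised in one coordinate system. Your outer-case decomposition $G_6(X_1,X_2)=\ell_1^6+\ell_2^6$ already presupposes this.

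The paper's proof of Proposition~\ref{number} is precisely this simultaneous normal form.
\begin{itemize}
\item Inner case: if $X_0\mapsto\zeta_5X_0$ and $X_1\mapsto\zeta_5X_1$ both preserve $F_6$, only $X_2^6$, $X_0^5X_2$, $X_1^5X_2$ survive. The branch curve is then reducible, so $S$ is singular.
\item Outer case: only $X_0^6,X_1^6,X_2^6$ survive, giving the Fermat sextic with its four outer points. Its order-$6$ homologies are centred at the three coordinate points, so a total of $3$ cannot occur.
\end{itemize}
The paper also takes the commutation for granted. One way to supply it: the $\mathbb{Z}/5\mathbb{Z}$-inner and $\mathbb{Z}/6\mathbb{Z}$-outer weighted Galois points other than $[0:0:0:1]$ correspond bijectively to the inner and outer Galois points of the branch sextic $B$. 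Then apply Yoshihara's theorem for smooth plane curves of degree $\ge5$: at most one inner Galois point, and $0$, $1$ or $3$ outer ones, with $3$ only for the Fermat curve.

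\textbf{Step 2.} This step is wrong as written. A $Y$-shift cannot be absorbed by completing the square, because the shift is part of the projection. If $P=[1:0:0:c]$ in coordinates where $S:Y^2=F_6$, the projections centred at $P$ are $[X_1:X_2:Y-G_3]$ with $G_3(1,0,0)=c$. Completing the square restores $Y^2=F_6$ but does not restore $\pi_P=(X_1,X_2,Y)$.

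For $P\notin S$, or $P$ on the ramification curve, this is harmless. The degree is still $6$, resp.\ $5$, and invariance of $G_3$ under the generator forces $G_3\in\mathbb{C}[X_1,X_2]$, hence $c=0$.

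For $P\in S$ with $c\neq0$, however, the degree can drop to $3$ with a cyclic extension. On $Y^2=X_0^6+X_1^6+X_2^6$ take $P=[1:0:0:1]$ and the projection $[X_1:X_2:Y-X_0^3]$, which is exactly the shift used in the paper's Remark. With $w=y-x_0^3$ one gets $x_0^3=(x_1^6+1-w^2)/(2w)$, a Kummer cubic extension. Hence $[1:0:0:\pm1]$, $[0:1:0:\pm1]$, $[0:0:1:\pm1]$ are inner weighted Galois points. The Fermat sextic has no automorphism of order $5$, so it is not of the form $X_0^5F_1+F_6$. Part (2) is therefore false under the definition as written.

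The paper's Proposition~\ref{sgg}(2) (``if $P\in S$ then $\deg\pi_P=5$'') makes the same silent restriction. To prove (2) you must restrict the definition explicitly, e.g.\ to centres on the ramification curve or to projections of degree $5$, rather than hide it in the completing-the-square step.
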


There is another aspect of weighted Galois points which is specific to the geometry of $K3$ surfaces. 
Recently, we (\cite{MT1}, \cite{MT2}, \cite{MT3}) obtained a characterization of 
quartic surfaces with a (quasi-)Galois point in terms of $K3$ surfaces with automorphisms. 
The results above suggest that the same philosophy should persist in the weighted setting.

Our second purpose is to show that this is indeed the case, and moreover
that weighted Galois points can be characterized in terms of automorphisms of $K3$ surfaces.  

\begin{mt}\label{mt2} 
The following statements hold:
\begin{enumerate}
\item If a smooth weighted hypersurface of degree 6 in $\mathbb{P}(1,1,1, 3)$ has a weighted Galois point,
then there exists an involution whose fixed locus consists of one smooth curve of genus 10.
Conversely, if a $K3$ surface has an involution whose fixed locus consists of one smooth curve of genus 10
then by a suitable coordinate change, it can be embedded in $\mathbb{P}(1,1,1, 3)$ 
as a smooth weighted hypersurface of degree 6 with a weighted Galois point.

\item If a smooth weighted hypersurface of degree 6 in $\mathbb{P}(1,1,1, 3)$ has a weighted Galois point
included in it, then there exists an automorphism of order 5 
whose fixed locus consists of one smooth curve of genus 2 and one isolated point.
Conversely, if a $K3$ surface has an automorphism of order 5 
whose fixed locus consists of one smooth curve of genus 2 and one isolated point
then by a suitable coordinate change, it can be embedded in $\mathbb{P}(1,1,1, 3)$ 
as a smooth weighted hypersurface of degree 6 with a weighted Galois point included in the hypersurface.

\item If a smooth weighted hypersurface of degree 6 in $\mathbb{P}(1,1,1, 3)$ has at least 
two weighted Galois points that are not included in it, then there exists an automorphism of order 6 
whose fixed locus consists of one smooth curve of genus 2.
Conversely, if a $K3$ surface has an automorphism of order 6 
whose fixed locus consists of one smooth curve of genus 2
then by a suitable coordinate change, it can be embedded in $\mathbb{P}(1,1,1, 3)$ 
as a smooth weighted hypersurface of degree 6 with at least two weighted Galois points 
that are not included in the hypersurface.
\end{enumerate}
\end{mt}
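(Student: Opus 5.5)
The plan is to read off the forward implications directly from the normal forms in Main Theorem \ref{mt1}, writing down the automorphisms explicitly. For the converses, I would use the known lattice-theoretic classification of non-symplectic automorphisms to produce a degree-two map to $\mathbb{P}^{2}$, and then diagonalize the induced action on the plane.

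\emph{(1)} Every smooth $S\colon Y^{2}=F_{6}(X_{0},X_{1},X_{2})$ carries the covering involution $\iota\colon Y\mapsto -Y$. Its fixed locus is $\{Y=0\}$, which is isomorphic to the smooth plane sextic $F_{6}=0$, a curve of genus $10$. So the forward direction needs only the equation, and Main Theorem \ref{mt1}(1) supplies the weighted Galois point.

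For the converse, an involution whose fixed locus is a single curve of genus $10$ is non-symplectic. By Nikulin's classification its invariants are $(r,a,\delta)=(1,1,1)$, so the invariant lattice is $\langle 2\rangle$, generated by an invariant class $h$ with $h^{2}=2$. I would check that $h$ is ample and that $|h|$ is base point free, ruling out the unigonal case by $\rho(S)^{\iota}=1$. Then $\varphi_{|h|}\colon S\to\mathbb{P}^{2}$ is a double cover whose covering involution is $\iota$, and its branch curve is the image of the fixed curve, a smooth sextic. This embeds $S$ in $\mathbb{P}(1,1,1,3)$, and Main Theorem \ref{mt1}(1) applies.

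\emph{(2)} By Main Theorem \ref{mt1}(2) we may assume $F_{6}=X_{0}^{5}F_{1}(X_{1},X_{2})+F_{6}(X_{1},X_{2})$. Take $g\colon X_{0}\mapsto\zeta_{5}X_{0}$, which preserves the equation. A point is fixed by $g$ if either $X_{0}=0$ or $X_{1}=X_{2}=0$.
\begin{itemize}
\item The first case gives $Y^{2}=F_{6}(X_{1},X_{2})$, a double cover of $\mathbb{P}^{1}$ branched at $6$ points, hence a smooth curve of genus $2$.
\item The second case forces $Y=0$ and gives the isolated point $(1:0:0:0)$. In $\mathbb{P}(1,1,1,3)$ no further points are fixed up to the weighted scaling, because $\zeta_{5}$ is not a cube root of unity times $1$ in the relevant way. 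I would verify this directly.
\end{itemize}
For the converse, let $g$ have order $5$ with fixed locus $C\cup\{p\}$ and $g(C)=2$. Then $g$ is non-symplectic, and by the Artebani--Sarti--Taki classification the invariant lattice is determined; in particular $C^{2}=2$ and $|C|$ is base point free of degree two onto $\mathbb{P}^{2}$. The covering involution commutes with $g$, so $g$ descends to an order-$5$ projectivity of $\mathbb{P}^{2}$ fixing the line that is the image of $C$. After diagonalizing this is $\mathrm{diag}(\zeta_{5},1,1)$, with $C$ lying over $X_{0}=0$. The branch sextic must be $g$-invariant, and the eigenvalue condition forces exactly the monomials of $X_{0}^{5}F_{1}+F_{6}(X_{1},X_{2})$. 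The remaining choice of eigencharacter is excluded because it would make the fixed locus wrong.

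\emph{(3)} By Main Theorem \ref{mt1}(3) we may assume $F_{6}=X_{0}^{6}+F_{6}(X_{1},X_{2})$, and take $h\colon X_{0}\mapsto\zeta_{6}X_{0}$.
\begin{itemize}
\item The locus $X_{0}=0$ is again pointwise fixed and is the genus-$2$ curve $Y^{2}=F_{6}(X_{1},X_{2})$.
\item On $X_{1}=X_{2}=0$ we have the two points $(1:0:0:\pm1)$. Using the weighted rescaling by $\zeta_{6}^{-1}$, $h$ sends $(1:0:0:Y)$ to $(1:0:0:-Y)$, so these two points are swapped and not fixed.
\end{itemize}
The converse runs as in (2), using the classification of order-$6$ non-symplectic automorphisms (Dillies) together with the invariant class $C$, $C^{2}=2$. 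The descended automorphism is $\mathrm{diag}(\zeta_{6},1,1)$ up to the covering involution, and invariance of the sextic gives $X_{0}^{6}+F_{6}(X_{1},X_{2})$.

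The main obstacle is the converse direction in (2) and (3). There one must show from the lattice data that $|C|$ really is a hyperelliptic double plane rather than unigonal, and that the lifted action on $Y$ is compatible with the chosen eigencharacter. I would first try to derive both facts from the explicit invariant lattices in the cited classifications.
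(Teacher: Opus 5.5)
Your route is the paper's. The forward directions come from the Galois generators $Y\mapsto -Y$, $X_0\mapsto\zeta_5X_0$ and $X_0\mapsto\zeta_6X_0$; the converses come from a double-plane model followed by diagonalizing the induced projectivity. Parts (1) and (3) go through. In (3) the issue below does not arise: the semi-invariant sextic $aX_0^6+F_6(X_1,X_2)$ is automatically smooth. Indeed $F_6(X_1,X_2)$ has distinct roots because $C$ is smooth, and $a\neq 0$ since otherwise the double plane is a cone with a non-rational singularity. So $C$ is ample for free.

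The obstacle you flag (hyperelliptic versus unigonal) is not the real one. $C$ is an irreducible curve with $C^2=2$, so by Saint-Donat $|C|$ is base point free and $\varphi_{|C|}$ is $2{:}1$ onto $\mathbb{P}^2$, with no lattice input needed.

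The genuine gap is in the converse of (2). It lies between ``$|C|$ gives a double cover of $\mathbb{P}^2$'' and ``$S$ is the smooth hypersurface $Y^2=X_0^5F_1+F_6$''. For the second you need $C$ to be ample, i.e.\ $\varphi_{|C|}$ must contract no $(-2)$-curve. Neither your argument nor the AST datum $S(g)\cong H_5$ gives this, since the latter only sees the invariant part of $NS(S)$.

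In fact it fails. The semi-invariant sextic $X_0^5F_1+F_6(X_1,X_2)$ is singular exactly when $F_1\mid F_6(X_1,X_2)$. A typical case is $Y^2=X_2\,(X_0^5+G_5(X_1,X_2))$: a line plus a quintic meeting in five nodes, which $\bar g$ permutes cyclically. The paper's example in Proposition~\ref{number}(1) is of this kind. On the minimal resolution $S$, $g$ still fixes exactly the genus-$2$ curve over $X_0=0$ and the point $(1:0:0:0)$.

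Now take $G_5$ very general. Then $NS(S)$ is a finite-index overlattice of the rank-$6$ lattice spanned by the reduced preimage $R$ of $X_2=0$ and the exceptional curves $E_1,\dots,E_5$. Here $h=2R+\sum E_i$ and $|\det|\le 16$. For any $h'$ with $h'^2=2$, the complement $h'^\perp$ is negative definite of rank $5$ with $|\det|\le 32$. Hermite's bound ($\gamma_5^5=8$) shows that a root-free even lattice of rank $5$ needs $\det\ge 4^5/8=128$. So $h'^\perp$ contains a $(-2)$-class, and $h'$ is not ample.

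Hence such an $S$ admits no embedding as a smooth degree-$6$ hypersurface in $\mathbb{P}(1,1,1,3)$ at all. This step cannot be repaired without an extra hypothesis, for instance $C$ ample (equivalently, the branch sextic smooth, which holds e.g.\ when $NS(S)\cong H_5$). The paper's one-line citation of AST for this converse conceals the same gap.
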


We summarize the contents of this paper. 
In Section \ref{tentohouteishiki}, we study smooth weighted hypersurfaces
given by $Y^{2}=F_{6}(X_{0}, X_{1}, X_{2})$, i.e.,  
double covers of $\mathbb{P}^{2}$ branched along a nonsingular sextic curve.
Then we give proofs of Main Theorem \ref{mt1} and the first half of Main Theorem \ref{mt2}. 
In Section \ref{K3toGalois}, we prove the second half (after ``Conversely'') of Main Theorem \ref{mt2}
by using automorphisms of $K3$ surfaces.
Section \ref{curve} is an appendix.
We will discuss weighted Galois points for smooth curves of genus 1 or 2.

\begin{ack}
The author would like to express his gratitude to Professor Kei Miura for many helpful discussions and suggestions. 
This work was supported by JSPS KAKENHI Grant Number JP23K03036.
\end{ack}

\section{Weighted Galois points and equations}\label{tentohouteishiki}

Let $[X_{0}: X_{1}: X_{2}: Y]$ be the weighted homogeneous coordinates
on the weighted projective space 
$\mathbb{P}(1,1,1, 3)=\Proj \mathbb{C}[X_0,X_1,X_2,Y]$, 
hence $\deg X_{i}=1$ and $\deg Y=3$.
We first specify what we mean by a projection from a point of $\mathbb{P}(1,1,1, 3)$.

For a point $P_{Y}:=[0:0:0:1]$, the rational map
\[  \rho_Y: \mathbb{P}(1,1,1, 3) \dashrightarrow \mathbb{P}^{2},
 \quad
 [X_{0} : X_{1} : X_{2} : Y] \mapsto [X_{0} : X_{1} : X_{2}],\]
will be called the standard weighted projection centered at $P_{Y}$.
Similarly, we call
\[  \rho_{0}:\mathbb{P}(1,1,1, 3) \dashrightarrow\mathbb P(1,1,3),
 \quad
 [X_{0} : X_{1} : X_{2} : Y] \mapsto [X_{1} : X_{2} : Y] \]
the standard weighted projection centered at $P_{0}:=[1:0:0:0]$. 
The projections $\rho_{1}$ and $\rho_{2}$ are defined analogously.

More generally,  by a weighted projection centered at $P \in\mathbb{P}(1,1,1, 3)$, 
we mean a rational map obtained from one of the standard weighted projections 
by a weighted projective change of coordinates sending $P$ to its center. 
Thus, if an automorphism $\varphi$ of $\mathbb{P}(1,1,1, 3)$ satisfies $\varphi(P)=P_{i}$, then
\[  \rho_{i} \circ \varphi : \mathbb{P}(1,1,1, 3) \dashrightarrow \mathbb{P}(1,1,3) \]
is a weighted projection centered at $P$; if $\varphi(P)=P_{Y}$, we use $\rho_{Y}\circ\varphi$ instead.

Let $S$ be a weighted hypersurface in $\mathbb{P}(1,1,1, 3)$ and
$\pi_{P}:S\dashrightarrow H$ a weighted projection centered at $P$
whose restriction to $S$ is dominant and generically finite.
Note that the projection $\pi_{P}$ induces the extension 
$\mathbb{C}(S)/\pi_{P}^{\ast}\mathbb{C}(H)$ of function fields.
The point $P$ is called a \textit{weighted Galois point} for $S$
if there exists a weighted projection $\pi_{P}^{\ast}$ inducing 
the Galois extension $\mathbb{C}(S)/\pi_{P}^{\ast}\mathbb{C}(H)$. 

\begin{rem}
Every point of $\mathbb{P}(1,1,1, 3)$ admits a weighted projection in the above sense.  
We consider a point $P=[a_{0} : a_{1}: a_{2}: b]\neq P_{Y}$.
Since $(a_{0}, a_{1},a_{2})\neq(0,0,0)$ holds, 
we may assume that $P=[1:0:0:c]$ after a linear change of the variables $X_{0}, X_{1}, X_{2}$.
Replacing $Y$ by $Y-cX_{0}^{3}$, which is a weighted homogeneous change of
coordinates, we obtain $P=[1:0:0:0]$.
Thus every point distinct from $P_{Y}$ can be used as the center of a
weighted projection to $\mathbb{P}(1,1,3)$ after a suitable weighted
projective change of coordinates.
\end{rem}

Let $S$ be a smooth weighted hypersurface of degree 6 in $\mathbb{P}(1,1,1, 3)$.
Note that while $S$ is generally defined as
$Y^{2}+YF_{3}(X_{0}, X_{1}, X_{2})+F_{6}(X_{0}, X_{1}, X_{2})$,
if we replace the coordinates with $Y'=Y+F_{3}(X_{0}, X_{1}, X_{2})/2$,
it is of the form $S:Y'^{2}+F_{6}(X_{0}, X_{1}, X_{2})=0$. 
Here $F_{d}$ is a homogeneous polynomial of degree $d$.

In the following, we assume that $S$ is given by
\[ S:Y^{2}=F_{6}(X_{0}, X_{1}, X_{2}). \]
This is a $K3$ surface, which is a double covering of the projective plane branched along 
a nonsingular sextic curve $F_{6}(X_{0}, X_{1}, X_{2})=0$.

A weighted Galois point $P$ is called \textit{inner} (resp. \textit{outer}) 
if $P \in S$ (resp. $P \not \in S$).

\begin{prop}\label{gm1}
The following statements hold:
\begin{enumerate}
\item The point $[0:0:0:1]$ is an outer weighted Galois point for $S$.
\item If $F_{6}(X_{0}, X_{1}, X_{2})=X_{0}^{5}F_{1}(X_{1}, X_{2})+F_{6}(X_{1}, X_{2})$ holds
then the point $[1:0:0:0]$ is an inner weighted Galois point for $S$.
\item If $F_{6}(X_{0}, X_{1}, X_{2})=X_{0}^{6}+F_{6}(X_{1}, X_{2})$ holds
then the point $[1:0:0:0]$ is an outer weighted Galois point for $S$.
\end{enumerate}
\end{prop}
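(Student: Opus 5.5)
In each case I will write down the function field $\mathbb{C}(S)$ and the subfield pulled back by the relevant standard weighted projection. Then I will exhibit an extension that is obviously Galois: either of degree $2$, or a Kummer extension generated by a root of an element of the base field. No change of coordinates is needed, since every center in the statement is already a standard center $P_Y$ or $P_0$.

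\textbf{Case (1).} Work with affine coordinates $x=X_1/X_0$, $y=X_2/X_0$ and $w=Y/X_0^3$ on $S$. Then
\[
\mathbb{C}(S)=\mathbb{C}(x,y)(w), \qquad w^2=F_6(1,x,y).
\]
The projection $\rho_Y$ pulls back $\mathbb{C}(\mathbb{P}^2)=\mathbb{C}(x,y)$. The restriction of $\rho_Y$ to $S$ is dominant and generically finite, of degree $2$. A degree $2$ extension in characteristic $0$ is always Galois, with Galois group generated by $Y\mapsto -Y$. Finally, $P_Y\notin S$, because substituting $[0:0:0:1]$ into the equation gives $1\neq 0$. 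So $P_Y$ is an outer weighted Galois point.

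\textbf{Cases (2) and (3).} Here the projection is $\rho_0\colon [X_0:X_1:X_2:Y]\mapsto[X_1:X_2:Y]\in\mathbb{P}(1,1,3)$. The function field of $\mathbb{P}(1,1,3)$ is generated by degree-$0$ ratios, namely
\[
\mathbb{C}(\mathbb{P}(1,1,3))=\mathbb{C}(t,u), \qquad t=X_2/X_1,\quad u=Y/X_1^3.
\]
On $S$, set $s=X_0/X_1$. Then $\mathbb{C}(S)=\mathbb{C}(t,u,s)$, and dividing the defining equation by $X_1^6$ gives:
\begin{itemize}
\item in case (2): $u^2=s^5F_1(1,t)+F_6(1,t)$;
\item in case (3): $u^2=s^6+F_6(1,t)$.
\end{itemize}

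\textbf{Case (2).} First, $F_1(1,t)\neq 0$. Otherwise $F_1$ is a multiple of $X_2$, and then $F_1\equiv 0$ would make $S$ singular, so this needs a one-line check. Given this, solve for $s^5$:
\[
s^5=\frac{u^2-F_6(1,t)}{F_1(1,t)}\in\mathbb{C}(t,u).
\]
So $\mathbb{C}(S)=\mathbb{C}(t,u)(s)$ is generated by a fifth root of an element $a$ of the base field. The base field contains a primitive fifth root of unity $\zeta_5$. Hence the extension is Kummer and therefore Galois, with group action $X_0\mapsto \zeta_5X_0$. Its degree is exactly $5$, which requires checking that $a$ is not a fifth power in $\mathbb{C}(t,u)$. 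Alternatively, the action of $\mathbb{Z}/5$ on $S$ given by $X_0\mapsto\zeta_5 X_0$ is faithful and has fixed field equal to $\mathbb{C}(t,u)$, and that already gives the Galois property. This also shows $\rho_0|_S$ is dominant and generically finite. Finally, $P_0=[1:0:0:0]\in S$, since $F_6(1,0,0)=0$ (the polynomial has no $X_0^6$ term). So $P_0$ is inner.

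\textbf{Case (3).} Similarly $s^6=u^2-F_6(1,t)$, so the extension is Kummer of degree $6$ with group generated by $X_0\mapsto\zeta_6X_0$. Here $P_0\notin S$, because $0\neq 1$ at that point, so $P_0$ is outer.

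\textbf{Main difficulty.} The only delicate point is the irreducibility and degree of the extensions. I would handle it with the fixed-field argument: the cyclic group acting faithfully on $\mathbb{C}(S)$ has fixed field containing $\mathbb{C}(t,u)$. Counting degrees shows equality, since $[\mathbb{C}(S):\mathbb{C}(t,u)]\le 5$ (resp. $\le 6$) from the equation, while the group has order $5$ (resp. $6$). Artin's theorem then gives that the extension is Galois.
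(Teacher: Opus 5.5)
Your proof is correct and follows essentially the same route as the paper: in each case $\mathbb{C}(S)$ is obtained from the pulled-back field by adjoining a root of $T^{2}-f$, $T^{5}-a$ or $T^{6}-a$, and all the conjugates ($\pm w$, $\zeta_5^{i}s$, $\zeta_6^{i}s$) already lie in $\mathbb{C}(S)$. Your additional checks are details the paper leaves implicit: that the centre does or does not lie on $S$, that $F_1\not\equiv 0$ by smoothness at $[1:0:0:0]$, and the Artin fixed-field argument in place of the paper's unproved claim that $T^{5}-a$ is the minimal polynomial. The aside ``$F_1$ is a multiple of $X_2$'' is not needed, because $F_1(1,t)$ vanishes in $\mathbb{C}(t)$ only when $F_1\equiv 0$.
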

\begin{proof}
\begin{enumerate}
\item  We take a weighted projection 
\[\pi_{P}:\mathbb{P}(1, 1, 1, 3) \dashrightarrow \mathbb{P}^{2}, \ \ [X_{0}: X_{1}: X_{2}: Y] \mapsto [X_{0}: X_{1}: X_{2}]\]
from $P:=[0:0:0:1]$.
Put $x_{1}:=X_{1}/X_{0}$, $x_{2}:=X_{2}/X_{0}$, $y:=Y/X_{0}^{3}$, and 
$f_{6}(x_{1}, x_{2})=F_{6}(1, X_{1}/X_{0}, X_{2}/X_{0})$.
Since the function field $\mathbb{C}(S)$ of $S$ is the quotient field of 
the ring $\mathbb{C}[x_{1}, x_{2}]/(y^{2}-f_{6}(x_{1}, x_{2}))$,
it is $\mathbb{C}(x_{1}, x_{2})(y)$ satisfying $y^{2}=f_{6}(x_{1}, x_{2})$.
We consider the extension 
\[ \mathbb{C}(S)/\pi_{P}^{\ast}\mathbb{C}(\mathbb{P}^{2})=
Q \left( \mathbb{C}[x_{1}, x_{2}]/(y^{2}-f_{6}(x_{1}, x_{2})) \right) /\mathbb{C}(x_{1}, x_{2}).\]
We remark that $T^{2}-f_{6}(x_{1}, x_{2}) \in \mathbb{C}(x_{1}, x_{2})[T]$ is the minimal polynomial of $y$ over $\mathbb{C}(x_{1}, x_{2})$,
and $\mathbb{C}(S)$ includes the other root $-y$.
Thus, the extension is Galois.

\item We take a weighted projection 
\[\pi_{Q}:\mathbb{P}(1, 1, 1, 3) \dashrightarrow \mathbb{P}(1, 1,  3), \ \ [X_{0}: X_{1}: X_{2}: Y] \mapsto [X_{1}: X_{2}:Y]\]
from $Q:=[1:0:0:0]$.
Put $x_{0}:=X_{0}/X_{2}$, $x_{1}:=X_{1}/X_{2}$, $y:=Y/X_{2}^{3}$.
Since the function field of $S$ is $\mathbb{C}(x_{1}, y)(x_{0})$ satisfying $y^{2}=x_{0}^{5}f_{1}(x_{1})+f_{6}(x_{1})$,
and $\pi_{P}^{\ast}\mathbb{C}(\mathbb{P}(1, 1,  3)) \simeq \mathbb{C}(x_{1}, y)$, the polynomial
\[ T^{5}-\frac{y^{2}-f_{6}(x_{1})}{f_{1}(x_{1})} \in \mathbb{C}(x_{1}, y)[T] \]
is the minimal polynomial of $x_{0}$ over $\mathbb{C}(x_{1}, y)$.
The other roots $\zeta_{5}^{i}x_{0} \  (i=1, 2, 3, 4)$ are included in $\mathbb{C}(S)$.
Thus, the extension $\mathbb{C}(S) / \pi_{P}^{\ast}\mathbb{C}(\mathbb{P}(1, 1,  3))$ is Galois.

\item The same argument as above shows the assertion.
\end{enumerate}
\end{proof}

Note that an element $\tau\in G_{P}$ induces an automorphism of the function field 
$\mathbb{C}(S)$ over $\pi_{P}^{\ast}\mathbb{C}(H)$, and hence a birational automorphism of $S$.
Since $S$ is a smooth $K3$ surface, every birational automorphism of $S$ is biregular.
Thus we may regard $G_{P}$ as a subgroup of $\Aut (S)$.

\begin{lem}\label{extend}
Let $P$ be a weighted Galois point for $S$.
The Galois group $G_{P}$ of the Galois extension $\mathbb{C}(S)/\pi_{P}^{\ast}\mathbb{C}(H)$
is a subgroup of $\Aut (\mathbb{P}(1,1,1,3))$.
\end{lem}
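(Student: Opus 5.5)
Since $G_{P}$ already sits inside $\Aut(S)$ (every birational automorphism of the $K3$ surface $S$ is biregular), the plan is to show that each $\tau\in G_{P}$ preserves the natural polarization $\mathcal{O}_{S}(1)$ and then to lift $\tau$ to a graded automorphism of the ring $\mathbb{C}[X_{0},X_{1},X_{2},Y]$ preserving the ideal $(Y^{2}-F_{6})$. Such a graded automorphism is exactly an element of $\Aut(\mathbb{P}(1,1,1,3))$ preserving $S$.

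\textbf{Step 1: $\tau$ preserves $H:=\mathcal{O}_{S}(1)$.} Here $H$ is the pullback of a line under the double cover $S\to\mathbb{P}^{2}$, so $H^{2}=2$. After the coordinate change used to define $\pi_{P}$, we may take $P$ to be a standard center. We split into two cases.
\begin{itemize}
\item If the center is $P_{Y}$, the fibers of $\pi_{P}$ are the fibers of $S\to\mathbb{P}^{2}$. Since $\tau$ commutes with $\pi_{P}$, it preserves this morphism and therefore fixes the class $H$.
\item If the center is $P_{0}$, then $\pi_{P}$ is $[X_{1}:X_{2}:Y]$. The composition with the projection $\mathbb{P}(1,1,3)\dashrightarrow\mathbb{P}^{1}$, $[X_{1}:X_{2}:Y]\mapsto[X_{1}:X_{2}]$, is $\tau$-invariant. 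Hence $\tau$ preserves the pencil $|H-E|$ or $|H|$ cut out by $\lambda X_{1}+\mu X_{2}$. Concretely, $\tau^{*}X_{1}/X_{2}=X_{1}/X_{2}$ as rational functions. So $\tau$ preserves the linear system spanned by $X_{1},X_{2}$, whose members are the curves $\{\lambda X_{1}+\mu X_{2}=0\}\in|H|$. It follows that $\tau^{*}H\sim H$.
\end{itemize}
In either case $\tau^{*}\mathcal{O}_{S}(1)\cong\mathcal{O}_{S}(1)$. The cases $P_{1},P_{2}$ are identical.

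\textbf{Step 2: Lift $\tau$ to the graded ring.} By Step 1, $\tau$ acts on the section ring $R=\bigoplus_{k}H^{0}(S,\mathcal{O}_{S}(k))$ by a graded automorphism; we fix a linearization, which is possible up to scalar. Then $R\cong\mathbb{C}[X_{0},X_{1},X_{2},Y]/(Y^{2}-F_{6})$, because $S$ is projectively normal as a weighted hypersurface. Moreover $H^{0}(\mathcal{O}_{S}(1))=\langle X_{0},X_{1},X_{2}\rangle$ and $H^{0}(\mathcal{O}_{S}(3))=\Sym^{3}\langle X_{i}\rangle\oplus\mathbb{C}Y$. So $\tau^{*}$ sends each $X_{i}$ to a linear form $L_{i}(X)$, and sends $Y$ to $cY+G_{3}(X)$ with $c\neq 0$. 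This substitution defines a graded automorphism of the polynomial ring, hence an element $\tilde\tau\in\Aut(\mathbb{P}(1,1,1,3))$ with $\tilde\tau(S)=S$ and $\tilde\tau|_{S}=\tau$. The assignment $\tau\mapsto\tilde\tau$ is injective and respects composition, because the map $\Aut(\mathbb{P}(1,1,1,3))_{S}\to\Aut(S)$ is injective: $S$ is not contained in any hypersurface of degree $\le 3$ other than those involving $Y^{2}$. This realizes $G_{P}$ as a subgroup.

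\textbf{Main obstacle.} The main difficulty is Step 1 in the inner and outer cases with a center of type $P_{0}$. There one must argue carefully that invariance of $\pi_{P}^{*}\mathbb{C}(H)$ forces invariance of the degree-one linear system, and not merely of some rational pencil. I would first check this directly. The field $\pi_{P}^{*}\mathbb{C}(\mathbb{P}(1,1,3))$ contains $x_{1}=X_{1}/X_{2}$ and $y=Y/X_{2}^{3}$, and these are fixed by $\tau$. The divisor of the fixed function $x_{1}$ is $C_{1}-C_{2}$, where $C_{i}=\{X_{i}=0\}\in|H|$. Hence $\tau$ maps the pencil $\{X_{1}-tX_{2}=0\}$ to itself fiberwise. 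Since these fibers are members of $|H|$, we get $\tau^{*}H\sim H$.
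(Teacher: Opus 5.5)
Your proposal is correct and follows the paper's proof. Both show $\tau^{\ast}\mathcal{O}_{S}(1)\cong\mathcal{O}_{S}(1)$ and then read off $\tau^{\ast}X=AX$, $\tau^{\ast}Y=aY+F_{3}(X)$ from the action on $H^{0}(\mathcal{O}_{S}(1))$ and $H^{0}(\mathcal{O}_{S}(3))$ to get a graded automorphism of $\mathbb{C}[X_{0},X_{1},X_{2},Y]$. Your divisor argument with the $\tau$-invariant function $X_{1}/X_{2}$ justifies the invariance of $\mathcal{O}_{S}(1)$ more carefully than the paper's one-line appeal to $\mathcal{O}_{S}(1)=\pi_{P}^{\ast}\mathcal{O}_{\mathbb{P}(1,1,3)}(1)$, which is stated for a map that is only rational in the inner case.
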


\begin{proof}
If $P=[0:0:0:1]$ then the automorphism 
$[X_{0}: X_{1}: X_{2}: Y] \mapsto [X_{0}: X_{1}: X_{2}:-Y]$
is a generator of $G_{P}$ by Proposition \ref{gm1} (1).

Assume $P\neq [0:0:0:1]$.
Since $\mathcal{O}_{S}(1)=\pi_{P}^{\ast}\mathcal{O}_{\mathbb{P}(1,1,3)}(1)$ and 
$\tau \in G_{P}$ satisfies $\pi_{P} \circ \tau=\pi_{P}$, we have 
$\tau^{\ast}\mathcal{O}_{S}(1)=\mathcal{O}_{S}(1)$.
Hence it induces linear automorphisms on $H^{0}(S, \mathcal{O}_{S}(1))=\langle X_{0}, X_{1}, X_{2} \rangle$. 
It also induces linear automorphisms on 
$H^{0}(S, \mathcal{O}_{S}(3))=\Sym^{3}H^{0}(S, \mathcal{O}_{S}(1) \oplus \mathbb{C}Y)$.
Since $\tau^{\ast}$ preserves $\Sym^{3}H^{0}(S, \mathcal{O}_{S}(1))$, 
there exist $A \in GL(3,\mathbb{C})$, $a \in \mathbb{C}^{\ast}$, 
and a homogeneous polynomial $F_{3}(X_{0}, X_{1},X_{2})$ of degree 3 such that
 \[ \tau^{\ast} \begin{pmatrix} X_{0}\\ X_{1}\\ X_{2} \end{pmatrix} 
 = A \begin{pmatrix} X_{0} \\ X_{1} \\ X_{2} \end{pmatrix}, \quad \tau^{\ast}Y = aY+F_{3}(X_{0}, X_{1}, X_{2}). \]
Since these transformations preserve the weights of $X_{0}, X_{1}, X_{2}, Y$,
these define a graded automorphism of $\mathbb{C}[X_{0}, X_{1}, X_{2}, Y]$.
Thus $\tau$ extends to an automorphism of $\mathbb{C}[X_{0}, X_{1}, X_{2}, Y]$, 
meaning $G_{P}$ is a subgroup of $\Aut (\mathbb{P}(1,1,1,3))$.
\end{proof}

\begin{prop}\label{sgg}
Let $P$ be a weighted Galois point for $S$ and $G_{P}$ the Galois group of the Galois extension $\mathbb{C}(S)/\pi_{P}^{\ast}\mathbb{C}(H)$.
\begin{enumerate}
\item If $P=[0:0:0:1]$ then $G_{P}$ is isomorphic to $\mathbb{Z}/2\mathbb{Z}$.
\item If $P\in S$ then $G_{P}$ is isomorphic to $\mathbb{Z}/5\mathbb{Z}$.
\item If $P \not \in S$ and $P\neq [0:0:0:1]$ then $G_{P}$ is isomorphic to $\mathbb{Z}/6\mathbb{Z}$.
\end{enumerate}
\end{prop}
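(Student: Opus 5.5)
The order of $G_P$ equals the degree of the extension $\mathbb{C}(S)/\pi_{P}^{\ast}\mathbb{C}(H)$, i.e. the degree of the restricted projection. So the plan is to compute this degree in each of the three cases and then show that the group is cyclic. Case (1) is immediate from the proof of Proposition \ref{gm1} (1): the extension is generated by $y$ with minimal polynomial $T^2-f_6$. It therefore has degree $2$, and $G_P\cong\mathbb{Z}/2\mathbb{Z}$.

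For $P\neq[0:0:0:1]$ we use the Remark to change coordinates so that $P=[1:0:0:0]$ and $\pi_P=\rho_0$. The coordinate change $Y\mapsto Y-cX_0^3$ reintroduces a linear term in $Y$. So after the change $S$ is given by an equation $Y^2+YG_3+G_6=0$ in $X_0,X_1,X_2$, and we must not complete the square again in a way that moves $P$. We expand in $X_0$:
\[ Y^2+Y(aX_0^3+\dots)+bX_0^6+X_0^5G_1(X_1,X_2)+\dots=0. \]
Here $P\in S$ if and only if $b=0$. A general fibre of $\rho_0$ is a line $\{[t:X_1:X_2:Y]\}$ with $X_1,X_2,Y$ fixed, so the degree of $\pi_P|_S$ is the degree in $X_0$ of the defining polynomial, i.e. the degree of $x_0$ over $\mathbb{C}(x_1,y)$.
\begin{itemize}
\item If $P\notin S$, then $b\neq0$ and the degree is $6$.
\item If $P\in S$, smoothness of $S$ at $P$ forces the gradient at $P$ to be nonzero. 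The only monomials that can contribute are $X_0^5X_1$, $X_0^5X_2$ and $X_0^3Y$. Since $Y$ has weight $3$, the point $P$ may be a quotient-type point of the ambient space; we check that the term $X_0^3Y$ can be removed by the substitution $Y\mapsto Y-\tfrac a2X_0^3$, which keeps $P$ fixed because $P$ has $Y=0$ and $b=0$ after adjusting. Then $G_1\neq0$ and the degree is $5$.
\end{itemize}
Irreducibility of the polynomial in $x_0$ holds because $S$ is irreducible. So $|G_P|=5$ or $6$ respectively.

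It remains to show cyclicity, which is automatic for order $5$. For order $6$ we use Lemma \ref{extend}: each $\tau\in G_P$ is a graded automorphism, and since $\rho_0\circ\tau=\rho_0$, it must fix $X_1,X_2,Y$ up to the common weighted scaling. A graded automorphism of this kind has the form
\[ X_0\mapsto \alpha X_0+\ell(X_1,X_2),\qquad X_i\mapsto\lambda X_i,\qquad Y\mapsto\lambda^3Y, \]
and after rescaling we may take $\lambda=1$. It cannot involve $Y$ in the image of $X_0$, for weight reasons. Hence $G_P$ embeds in the affine group $\{X_0\mapsto\alpha X_0+\ell\}$. A finite subgroup of this group is conjugate to a subgroup of the diagonal $\alpha$-part, by averaging $\ell$: this is a Maschke-type argument in characteristic $0$. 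The $\alpha$-part is a finite subgroup of $\mathbb{C}^{\ast}$, hence cyclic. Therefore $G_P\cong\mathbb{Z}/6\mathbb{Z}$, and the same argument recovers $\mathbb{Z}/5\mathbb{Z}$ in case (2).

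The main obstacle I expect is this last step: justifying rigorously that $\tau$ acts trivially on $X_1,X_2,Y$ modulo the $\mathbb{C}^{\ast}$-action, when $\rho_0$ is only a rational map to the weighted space $\mathbb{P}(1,1,3)$. I would handle it by noting that $\tau^{\ast}$ fixes the functions $x_1=X_1/X_2$ and $y=Y/X_2^3$. This forces $\tau^{\ast}X_1=\lambda X_1$, $\tau^{\ast}X_2=\lambda X_2$ and $\tau^{\ast}Y=\lambda^3Y$, using the explicit form of $\tau^{\ast}$ given in the proof of Lemma \ref{extend}.
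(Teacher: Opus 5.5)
Your parts (1) and (3) are fine. For (3), the argument that $G_P$ embeds in the group $\{X_0\mapsto\alpha X_0+\ell(X_1,X_2)\}$, and is therefore cyclic, is a more explicit version of the paper's ``$G_P$ acts on the affine line $L\setminus\{\infty\}$'' argument. Your observation that $b\neq0$ forces degree $6$ holds for every admissible coordinate change.

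The gap is in (2). The point $P=[1:0:0:0]$ is a smooth point of $\mathbb{P}(1,1,1,3)$, since the chart $X_0\neq0$ is $\mathbb{C}^3$, so there is no quotient-singularity issue. At $P$ the $y$-derivative of the local equation is $a$, so smoothness of $S$ at $P$ only gives $(a,G_1)\neq(0,0)$. Your reduction to $a=0$ fails. The substitution $Y\mapsto Y-\tfrac a2X_0^3$ turns $Y^2+aX_0^3Y+\cdots$ into $Y^2-\tfrac{a^2}{4}X_0^6+\cdots$. So $[1:0:0:0]$ is no longer on $S$, and $P$ now has $Y$-coordinate $a/2$. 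You have just undone the recentering of the Remark, against your own warning. Your argument therefore only covers $a=0$. One checks that $a\neq0$ exactly when $P$ lies off the ramification curve $\{Y=0\}$ of the normal form $Y^2=F_6$, for every choice of coordinate change. On that curve $G_1\neq0$ does follow and the degree is $5$.

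The case $a\neq0$, $G_1=0$ cannot be repaired, because the statement fails there. Take $S: W^2=X_0^6+X_1^6+X_2^6$ and $P=[1:0:0:1]\in S$. The change $Y=W-X_0^3$, exactly the one used in the Remark, sends $P$ to $[1:0:0:0]$. It turns the equation into $2X_0^3Y=X_1^6+X_2^6-Y^2$, so $a=2$, $G_1=0$, and $S$ is smooth at $P$.

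Now $\rho_0|_S$ has degree $3$. In the coordinates $x_0=X_0/X_2$, $x_1=X_1/X_2$, $y=Y/X_2^3$ we have $x_0^3=(x_1^6+1-y^2)/(2y)$. This is not a cube in $\mathbb{C}(x_1,y)$, since it has a simple pole along $y=0$. So the extension is Kummer, hence Galois, and $P$ is an inner weighted Galois point with $G_P\cong\mathbb{Z}/3\mathbb{Z}$.

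The paper's own proof of (2) makes the same unjustified step (``since $S$ is smooth, $\deg\pi_P=5$''), so you are not missing an idea that the paper supplies. Part (2) needs an extra hypothesis, for instance that $P$ lies on the ramification curve. Under that hypothesis your argument is complete.
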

\begin{proof}
\begin{enumerate}
\item It follows from Proposition \ref{gm1} (1).
\item Note that the degree of the extension $[\mathbb{C}(S): \pi_{P}^{\ast}\mathbb{C}(H) ]$ is equal to $\deg \pi_{P}$.
Since $S$ is smooth, if $P\in S$ then $\deg \pi_{P}=5$.
Thus (2) holds.
\item By the same argument as above, the order of $G_{P}$ is 6.
Let $L$ be a line passing through $P$.
Since $\tau \in G_{P}$ preserves $L$ and fixes the intersection point $\infty $ of $L$ and $\mathbb{P}(1,1,3)$,
it acts on the affine line which is $L \setminus \{\infty \}$ as an automorphism.
Thus, we can regard $G_{P}$ as a group that acts on $\mathbb{A}^{1}$ as an automorphism.
Since such a group on $\mathbb{A}^{1}$ is cyclic, $G_{P}$ is isomorphic to $\mathbb{Z}/6\mathbb{Z}$.
\end{enumerate}
\end{proof}

\begin{rem}
The assumption that $S$ is smooth is essential.
Let $S$ be a weighted hypersurface given by 
$X_{0}^{4}F_{2}(X_{1}, X_{2})+F_{6}(X_{1}, X_{2}, Y)=0$
in $\mathbb{P}(1,1,1, 3)$.
Then $P=[1:0:0:0]$ is a weighted inner Galois point, and a singular point of type $A_{1}$.
It is easy to see that the order of $G_{P}$ is 4.
\end{rem}

From the above arguments, we obtain the following.

\begin{cor}\label{eq2}
Let $P \neq [0:0:0:1]$ be a Galois point for $S$.
\begin{enumerate}
\item If $P \in S$ then the defining equation of $S$ can be given by $Y^{2}=X_{0}^{5}F_{1}(X_{1}, X_{2})+F_{6}(X_{1}, X_{2})$.
\item If $P \not \in S$ then the defining equation of $S$ can be given by $Y^{2}=X_{0}^{6}+F_{6}(X_{1}, X_{2})$.
\end{enumerate}
\end{cor}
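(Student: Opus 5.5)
We normalize coordinates so that $P=[1:0:0:0]$, as in the Remark: after a linear change of $X_{0},X_{1},X_{2}$ and a substitution $Y\mapsto Y-cX_{0}^{3}$ this is always possible. The substitution changes the equation $Y^{2}=F_{6}$ into the general form $Y^{2}+YF_{3}+F_{6}=0$, so we must complete the square once more at the end. The weighted projection from $P$ is then $\rho_{0}$. Write the defining polynomial as a polynomial in $X_{0}$,
\[ G=\sum_{k=0}^{6} X_{0}^{k}\,G_{6-k}(X_{1},X_{2},Y), \]
where $G_{6-k}$ is weighted homogeneous of degree $6-k$. The coefficient $G_{0}$ is a constant, and $G_{0}\neq 0$ exactly when $P\notin S$.

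By Proposition \ref{sgg}, $G_{P}$ is cyclic of order $5$ (inner case) or $6$ (outer case). By Lemma \ref{extend}, a generator $\tau$ is an automorphism of $\mathbb{P}(1,1,1,3)$. Since $\rho_{0}\circ\tau=\rho_{0}$, the pullback $\tau^{\ast}$ fixes $X_{1},X_{2},Y$ up to a common weighted scalar. Rescaling that scalar to $1$, we get
\[ \tau^{\ast}X_{0}=\alpha X_{0}+\ell(X_{1},X_{2}), \qquad \tau^{\ast}X_{i}=X_{i}\ (i=1,2), \qquad \tau^{\ast}Y=Y. \]
Here we use that $\tau^{\ast}Y$ may a priori pick up $X_{0}$-terms, but it must lie in $\pi_{P}^{\ast}\mathbb{C}(H)$ after dehomogenizing, which forces $\tau^{\ast}Y=Y$. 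Since $\tau$ has finite order $n=5$ or $6$ and acts nontrivially, $\alpha$ is a primitive $n$-th root of unity. The substitution $X_{0}\mapsto X_{0}+\ell/(\alpha-1)$ then diagonalizes $\tau$ to $X_{0}\mapsto\alpha X_{0}$; this change fixes $P$. Now $S$ is $\tau$-invariant, so $G$ is a semi-invariant: $G(\alpha X_{0},X_{1},X_{2},Y)=\lambda G$. Each summand $X_{0}^{k}G_{6-k}$ is multiplied by $\alpha^{k}$, so only those $k$ with $\alpha^{k}=\lambda$ survive.

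\emph{Outer case ($n=6$).} Because $G_{0}\neq0$, we get $\lambda=1$. The exponents $k\in\{0,\dots,6\}$ with $\alpha^{k}=1$ are $0$ and $6$, so $G=cX_{0}^{6}+G_{6}(X_{1},X_{2},Y)$. Completing the square in $Y$ only involves $X_{1},X_{2}$, and it gives $G_{6}=Y^{2}-F_{6}(X_{1},X_{2})$ up to scaling. Rescaling $X_{0}$ yields $Y^{2}=X_{0}^{6}+F_{6}(X_{1},X_{2})$.

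\emph{Inner case ($n=5$).} Here $G_{0}=0$, and smoothness at $P$ forces $G_{1}=X_{0}^{5}$-coefficient to be nonzero, a linear form in $X_{1},X_{2}$ (it has weight $1$, so no $Y$). Thus $\lambda=\alpha^{5}=1$, and the surviving exponents are $k=0$ and $k=5$. Hence $G=X_{0}^{5}F_{1}(X_{1},X_{2})+G_{6}(X_{1},X_{2},Y)$. Completing the square in $Y$ as before gives $Y^{2}=X_{0}^{5}F_{1}(X_{1},X_{2})+F_{6}(X_{1},X_{2})$ after renaming.

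The main obstacle is justifying that $\tau^{\ast}$ acts trivially on $X_{1},X_{2},Y$ up to a scalar, rather than merely preserving the function field $\pi_{P}^{\ast}\mathbb{C}(H)$ elementwise in affine coordinates. First I would dehomogenize by $X_{2}$: fixing $x_{1}=X_{1}/X_{2}$ and $y=Y/X_{2}^{3}$ gives $\tau^{\ast}X_{1}=\mu X_{1}$, $\tau^{\ast}X_{2}=\mu X_{2}$, and $\tau^{\ast}Y=\mu^{3}Y$ modulo the form computed in Lemma \ref{extend}. Since the form in Lemma \ref{extend} includes a possible $F_{3}$-term, $y$-invariance kills that term except for its part involving $X_{0}$, and that part is excluded by the invariance itself.
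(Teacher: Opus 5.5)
Your route is the paper's own. The paper's proof only cites Lemma~\ref{extend} and Proposition~\ref{sgg}, and you spell out what that citation hides: move $P$ to $[1:0:0:0]$, lift a generator $\tau$ of $G_P$ to a graded automorphism acting by $X_0\mapsto\alpha X_0$, and read off the equation from semi-invariance.

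\textbf{The smoothness step in the inner case is false as stated.} You claim that smoothness of $S$ at $P$ forces the $X_0^5$-coefficient $G_1$ to be nonzero, and you use this to get $\lambda=\alpha^5=1$. In the chart $X_0=1$, with coordinates $x_1,x_2,y$, the weight-$3$ coefficient $G_3(X_1,X_2,Y)$ contains a term $\gamma Y$. So the linear part of the local equation at $P$ is $G_1(x_1,x_2)+\gamma y$, not $G_1$ alone.

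Such a term genuinely arises from your own shift of $Y$ by a multiple of $X_0^3$. Take the Fermat surface $Y^2=X_0^6+X_1^6+X_2^6$ and $P=[1:0:0:1]$. In the coordinate $Y-X_0^3$ (renamed $Y$) it becomes $Y^2+2X_0^3Y=X_1^6+X_2^6$. This is smooth at $P=[1:0:0:0]$ although it has no $X_0^5$-term. Hence your derivation of $\lambda$ is unsupported.

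The repair is immediate. The coefficient of $Y^2$ (your $k=0$ summand) is nonzero, so $\lambda=1$ in both cases. This kills every summand with $n\nmid k$, including the $X_0^3Y$ term. Only after that does smoothness at $P$ give $G_1\neq0$, which the statement does not even need.

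\textbf{The normalization of $\tau^{\ast}$ needs a cleaner justification.} Your last paragraph argues that $\tau^{\ast}$ fixes $X_1,X_2,Y$ up to a weighted scalar, but the sentence about ``its part involving $X_0$'' being ``excluded by the invariance itself'' is not an argument. The clean reason is a degree count. Invariance of $X_1/X_2$ and $Y/X_2^3$ in $\mathbb{C}(S)$ means that $L_1X_2-L_2X_1$ and $(a-\mu^3)Y+F_3(X_0,X_1,X_2)$ vanish on $S$. These have weighted degrees $2$ and $3$. The homogeneous ideal of $S$ is generated by the degree-$6$ polynomial $G$, so both vanish identically. This gives $L_i=\mu X_i$, $a=\mu^3$, and $F_3=0$ including its $X_0$-part.

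The same count justifies your claim that $\alpha$ is a primitive $n$-th root of unity. If $\tau^n|_S=\mathrm{id}$, then $(\alpha^n-1)X_0+c\,\ell$ vanishes on $S$, hence identically. Therefore the normalized lift of $\tau^n$ is the identity, and $\alpha\neq1$ unless $\tau$ is trivial.

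With these two repairs your argument is complete.
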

\begin{proof}
These follow from Lemma \ref{extend} and Proposition \ref{sgg}.
\end{proof}

\begin{prop}\label{number}
The following statements hold:
\begin{enumerate}
\item The number of weighted inner Galois points for $S$ is 0 or 1.
\item The number of weighted outer Galois points for $S$ is 1, 2, or 4.
\end{enumerate}
\end{prop}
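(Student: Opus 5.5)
Each weighted Galois point $P\neq P_{Y}$ has Galois group $G_{P}$ acting on $\mathbb{P}(1,1,1,3)$ by Lemma \ref{extend}, and this action is controlled by Proposition \ref{sgg} and Corollary \ref{eq2}. The plan is to translate each such point into a special linear form $\ell$ on $\langle X_{0},X_{1},X_{2}\rangle$ such that, after a change of coordinates with $\ell=X_{0}$, the sextic takes the normal form of Corollary \ref{eq2}. Counting weighted Galois points then reduces to counting how many such forms a fixed sextic can admit.

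For (1), suppose $P$ and $P'$ are two distinct inner weighted Galois points. By Corollary \ref{eq2}(1) there are coordinates with $F_{6}=X_{0}^{5}F_{1}(X_{1},X_{2})+F_{6}(X_{1},X_{2})$, where $P=[1:0:0:0]$, and $G_{P}$ is generated by $X_{0}\mapsto \zeta_{5}X_{0}$. Geometrically, the branch sextic $C$ has a point $p=[1:0:0]$ at which the tangent line $F_{1}=0$ meets $C$ with multiplicity $6$, and $C$ is invariant under a homology of order $5$ with center $p$. The image of $P'$ in $\mathbb{P}^{2}$ would give a second such point $p'$ with its own order-5 homology $\sigma'$. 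Then $\langle\sigma,\sigma'\rangle$ is a subgroup of $\Aut(C)$ for a smooth plane sextic of genus $10$. I would rule this out in one of two ways. The first is to show directly that the pencil of lines through $p$ and the one through $p'$ are incompatible with the normal form, for instance by substituting $p'=[a:b:c]$ into the form and deriving a contradiction. The alternative is to use that two distinct elements of order 5 that are homologies generate a group whose order, combined with the fixed-point data, is incompatible. I would first try the direct check: on the line $\overline{pp'}$ both total-tangency conditions must hold, and the $\mu_{5}$-invariance forces $p'$ to lie on the axis $X_{0}=0$, where $C$ restricts to $F_{6}(X_{1},X_{2})$ with distinct roots. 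That contradicts $p'$ being a sextuple-contact point whose homology fixes the axis pointwise.

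For (2), $P_{Y}$ is always an outer weighted Galois point by Proposition \ref{gm1}(1). Any other outer point corresponds, by Corollary \ref{eq2}(2), to a line $\ell=0$ together with a center $p$ such that $F_{6}=\ell^{6}+G_{6}(\text{axis coordinates})$, i.e. $C$ is invariant under an order-6 homology. Counting outer weighted Galois points other than $P_{Y}$ then amounts to counting centers $p$ of such homologies, keeping in mind that the lift $Y\mapsto Y$ versus $Y\mapsto -Y$ and the choices of sign may identify points. I would first show that each such center gives exactly one weighted Galois point $P$ not equal to $P_{Y}$. Next I would show that two distinct centers force $F_{6}=X_{0}^{6}+X_{1}^{6}+G(X_{1},X_{2})$-type compatibility. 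Finally, using the same axis argument as in (1), I would show that the centers form a set of non-collinear points, which in the end gives $F_{6}\cong X_{0}^{6}+X_{1}^{6}+X_{2}^{6}$ when there is more than one center. For the Fermat sextic there are exactly 3 centers, namely the coordinate points, so the total number of outer points is $1$, $2$, or $4$.

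The main obstacle is the exclusion of exactly two centers, i.e. of a total of $3$ outer points. One must show that two homologies of order 6 with distinct centers force a third center. I would try writing $F_{6}=X_{0}^{6}+X_{1}^{6}+G(X_{2})$ after normalising the two centers to $[1:0:0]$ and $[0:1:0]$. Invariance under both homologies kills all mixed monomials, so $G=cX_{2}^{6}$, and the third coordinate point is then automatically a center, giving $1+3=4$. Whether "2" arises then hinges on checking whether the second center's homology really forces this diagonal shape, and this is where I expect the real work to lie.
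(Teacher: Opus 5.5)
Your route is the same as the paper's: reduce to a simultaneous normal form for two Galois points, then read off the count. But the step that carries all the content is missing. You never show that a second Galois point $p'$ of the branch sextic $C$ lies on the axis $\ell$ of the homology $\sigma$ attached to $p$, and symmetrically that $p\in\ell'$.

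In (1) you assert that ``$\mu_5$-invariance forces $p'$ to lie on the axis''. Invariance only says that the set of inner Galois points is $\sigma$-stable. A priori $p'$ could have a free $\sigma$-orbit of five points on $\overline{pp'}$, and nothing you wrote excludes that. In (2) you leave the same step open. Without it you can neither diagonalise both homologies at once, so ``invariance kills all mixed monomials'' is unavailable, nor show that the Fermat sextic has no fourth center.

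Even in the on-axis case your contradiction is not the right one: distinct roots of $F_6(X_1,X_2)$ only say that the axis is transverse to $C$. What works is this. $\sigma$ fixes $p'$, hence preserves $T_{p'}$. The only $\sigma$-invariant lines through $p'\in\ell$ are $\ell$ (which is transverse to $C$) and $\overline{pp'}$. So $T_{p'}=\overline{pp'}\ni p$, contradicting $T_{p'}\cap C=\{p'\}$. The paper's proof simply asserts the normal forms $Y^2=X_0^5X_2+X_1^5X_2+X_2^6$ and $Y^2=X_0^6+X_1^6+X_2^6$, so it passes over this same step. An argument, or an appeal to Yoshihara's classification of Galois points of smooth plane curves, is needed here.

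Here is one way to close the gap. Both $\sigma$ and $\sigma'$ preserve $L=\overline{pp'}$. They restrict to automorphisms of $L$ of order $n=5$ (inner case) or $n=6$ (outer case), with fixed pairs $\{p,L\cap\ell\}$ and $\{p',L\cap\ell'\}$. The group $\langle\sigma,\sigma'\rangle$ acts faithfully on the genus-$10$ curve $C$, so it is finite. Its image $\Gamma\subset PGL(L)$ is therefore cyclic, dihedral, $A_4$, $S_4$ or $A_5$.
\begin{itemize}
\item For $n=6$, only the cyclic and dihedral cases contain elements of order $6$, and there all elements of order $\ge 3$ share one fixed pair.
\item For $n=5$ you must also rule out $A_5$. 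You can, because $\Gamma$ preserves the nonempty set $L\cap C$ of at most six points, while every $A_5$-orbit on $\mathbb{P}^1$ has at least $12$ points.
\end{itemize}
Hence $\{p,L\cap\ell\}=\{p',L\cap\ell'\}$, i.e.\ $p'\in\ell$ and $p\in\ell'$.

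For inner points this gives the tangent-line contradiction above. For outer points, set $q=\ell\cap\ell'$ and make $p,p',q$ the coordinate points. Then $\sigma=\mathrm{diag}(\zeta_6,1,1)$ and $\sigma'=\mathrm{diag}(1,\zeta_6,1)$. Invariance leaves only $X_0^6,X_1^6,X_2^6$, and any further center would have to lie on all three coordinate lines, which is impossible.

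Finally, your dictionary between weighted Galois points and Galois points of $C$ needs a line of proof. Two presentations $Y^2=F_6$ of $S$ differ in $Y$ only by a scalar, because the degree-$6$ part of the ideal is one-dimensional. So every weighted Galois point $P\neq P_Y$ has $Y$-coordinate $0$, and $P\mapsto p$ is injective.
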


\begin{proof}
(1) Assume that $S:Y^{2}=X_{0}^{5}F_{1}(X_{1}, X_{2})+F_{6}(X_{1}, X_{2})$ 
has a weighted inner Galois point $P\neq [1:0:0:0] \in S$.
Since $G_{P} \simeq \mathbb{Z}/5\mathbb{Z}$ acts on $S$ by Proposition \ref{sgg}, 
we may put $S: Y^{2}=X_{0}^{5}X_{2}+X_{1}^{5}X_{2}+X_{2}^{6}$.
Note that points $[1:0:0:0]$ and $[0:1:0:0]$ are weighted Galois points for $S$.
However, $S$ has five $A_{1}$ singular points at $[1:\zeta_{10}^{i}:0:0]$ ($i=1, 3, 5, 7, 9$).

We see that the point $Q=[1:-1:0:0]$ is a singular point of type $A_{1}$.
Put $x_{1}:=X_{1}/X_{0}$, $x_{2}:=X_{2}/X_{0}$, and $y:=Y/X_{0}^{3}$.
Then $Q$ and $S$ are denoted by $Q =(x_{1}, x_{2}, y)= (-1, 0, 0)$ and 
$S:y^{2}=(1+x_{1}^{5}+x_{2}^{5})x_{2}$, respectively.
If we replace the coordinates $u=x_{1}+1$ and $v=x_{2}$, then $Q=(u, v, y)=(0, 0, 0)$.
Thus, the local equation is
\begin{align*}
y^{2}&= (1+x_{1}^{5}+x_{2}^{5})x_{2}\\
&=(1+(-1+5u+O(u^{2})+v^{5}))v\\
&=5uv+(\text{degree 3 or higher}).
\end{align*}
Hence $Q$ is a singular point of type $A_{1}$.

(2) This is essentially the same as (1).
Assume that $S: Y^{2}=X_{0}^{6}+F_{6}(X_{1}, X_{2})$
has a weighted outer Galois point $P$ distinct from $[0:0:0:1]$ and $[1:0:0:0]$.
Since $G_{P} \simeq \mathbb{Z}/6\mathbb{Z}$ acts on $S$ by Proposition \ref{sgg},
we have $S: Y^{2}=X_{0}^{6}+X_{1}^{6}+ X_{2}^{6}$.
In this case, $[0:0:0:1]$, $[1:0:0:0]$, $[0:1:0:0]$, and $[0:0:1:0]$ are weighted outer Galois points.
See also Proposition \ref{gm1}.
\end{proof}

\begin{prop}
The following statements hold:
\begin{enumerate}
\item If a smooth weighted hypersurface of degree 6 in $\mathbb{P}(1,1,1, 3)$ has a weighted Galois point,
then there exists an involution whose fixed locus consists of one smooth curve of genus 10.

\item If a smooth weighted hypersurface of degree 6 in $\mathbb{P}(1,1,1, 3)$ has a weighted Galois point
included in it, then there exists an automorphism of order 5 
whose fixed locus consists of one smooth curve of genus 2 and one isolated point.

\item If a smooth weighted hypersurface of degree 6 in $\mathbb{P}(1,1,1, 3)$ has at least 
two weighted Galois points that are not included in it, then there exists an automorphism of order 6 
whose fixed locus consists of one smooth curve of genus 2.
\end{enumerate}
\end{prop}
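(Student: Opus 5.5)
The plan is to use the normal forms from Corollary \ref{eq2} and Proposition \ref{number}, write down explicit generators of the Galois groups from Proposition \ref{sgg}, and compute their fixed loci directly in $\mathbb{P}(1,1,1,3)$.

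For (1), Proposition \ref{gm1} (1) shows that $[0:0:0:1]$ is always a weighted Galois point. Its Galois group is generated by the covering involution $\iota:[X_0:X_1:X_2:Y]\mapsto[X_0:X_1:X_2:-Y]$. A point of $S$ is fixed by $\iota$ exactly when $Y=0$ or $X_0=X_1=X_2=0$. The point $[0:0:0:1]$ does not lie on $S$, so the fixed locus is $\{Y=0\}\cap S$. This is isomorphic to the branch sextic $F_6=0$ in $\mathbb{P}^2$, which is smooth because $S$ is smooth. Its genus is $(6-1)(6-2)/2=10$.

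For (2), Corollary \ref{eq2} (1) lets us write $S: Y^2=X_0^5F_1(X_1,X_2)+F_6(X_1,X_2)$. After a linear change in $X_1,X_2$ we may take $F_1=X_2$. The Galois group is generated by $\sigma:[X_0:X_1:X_2:Y]\mapsto[\zeta_5X_0:X_1:X_2:Y]$. In weighted projective space, a point is fixed by $\sigma$ if some $\lambda\in\mathbb{C}^*$ satisfies $(\zeta_5X_0,X_1,X_2,Y)=(\lambda X_0,\lambda X_1,\lambda X_2,\lambda^3Y)$. We split into cases by the value of $\lambda$.

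\begin{enumerate}
\item If $\lambda=1$, then $X_0=0$. The fixed set is the curve $C=\{X_0=0\}\cap S$, given by $Y^2=F_6(X_1,X_2)$ in $\mathbb{P}(1,1,3)$. This is a double cover of $\mathbb{P}^1$ branched at six points, so it has genus $2$. Smoothness of $S$ forces $F_6$ to have distinct roots: a multiple root of $F_6$ would give a singular point of $S$ on $X_0=0$, by checking partial derivatives, since $\partial/\partial X_0$ vanishes there.
\item If $\lambda=\zeta_5$, then $X_1=X_2=0$ and $\zeta_5^3Y=Y$, so $Y=0$. This gives the single point $[1:0:0:0]$, which lies on $S$.
\item For any other $\lambda$, all coordinates vanish, so there are no further fixed points.
\end{enumerate}

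The only delicate step is the weighted-projective bookkeeping of the scalars $\lambda$, and it is routine.

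For (3), Proposition \ref{number} and Corollary \ref{eq2} (2) let us write $S: Y^2=X_0^6+F_6(X_1,X_2)$. The Galois group of $[1:0:0:0]$ is generated by $\tau:X_0\mapsto\zeta_6X_0$, of order $6$. Running the same case analysis, $\lambda=1$ gives $X_0=0$ and hence the genus $2$ curve $Y^2=F_6(X_1,X_2)$, smooth for the same reason. The case $\lambda=\zeta_6$ forces $X_1=X_2=0$, and then $Y=\lambda^3Y=-Y$ gives $Y=0$. The resulting point $[1:0:0:0]$ is not on $S$, since $0\neq1$. All other $\lambda$ give nothing, so the fixed locus is exactly one smooth curve of genus $2$.

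The main obstacle I expect is making sure that no additional fixed points hide at the singular point $[0:0:0:1]$ of $\mathbb{P}(1,1,1,3)$, and that the orders of the automorphisms are exactly $5$ and $6$ on $S$. The first is settled because that point is not on $S$. The second holds because $\sigma^k$ and $\tau^k$ are not identities in $\mathbb{P}(1,1,1,3)$ for $0<k<5$ and $0<k<6$ respectively, as no scalar $\lambda$ realizes them.
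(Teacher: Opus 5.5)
Your proposal is correct and follows the paper's proof: you take the normal forms from Proposition \ref{gm1} and Corollary \ref{eq2}, write down the Galois-group generators $Y\mapsto -Y$, $X_0\mapsto\zeta_5X_0$, $X_0\mapsto\zeta_6X_0$, and read off their fixed loci, with your $\lambda$-bookkeeping, smoothness and genus checks supplying details the paper leaves implicit. One small slip: in the ``other $\lambda$'' case, $\lambda^3=1$ with $\lambda\neq 1$ yields the ambient fixed point $[0:0:0:1]$ rather than nothing, but as you note at the end this point is not on $S$, so the conclusion stands.
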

\begin{proof}
Since equations for weighted hypersurfaces are given by Proposition \ref{gm1} and Corollary \ref{eq2},
we take a generator of each Galois group.
\begin{enumerate}
\item Since a weighted hypersurface is $Y^{2}=F_{6}(X_{0}, X_{1}, X_{2})$, we find an automorphism
$[X_{0}: X_{1}: X_{2}:Y] \mapsto [X_{0}: X_{1}: X_{2}:-Y]$.
Its fixed locus consists of one smooth curve defined by $F_{6}(X_{0}, X_{1}, X_{2})=0$.

\item Since a weighted hypersurface is $Y^{2}=X_{0}^{5}F_{1}(X_{1}, X_{2})+F_{6}(X_{1}, X_{2})$, 
we find an automorphism $[X_{0}: X_{1}: X_{2}:Y] \mapsto [\zeta_{5}X_{0}: X_{1}: X_{2}:Y]$.
Its fixed locus consists of one smooth curve defined by $Y^{2}=F_{6}(X_{1}, X_{2})$, and 
one point $[1:0:0:0]$.

\item Since a weighted hypersurface is $Y^{2}=X_{0}^{6}+F_{6}(X_{1}, X_{2})$, 
we find an automorphism $[X_{0}: X_{1}: X_{2}:Y] \mapsto [\zeta_{6}X_{0}: X_{1}: X_{2}:Y]$.
Its fixed locus consists of one smooth curve defined by $Y^{2}=F_{6}(X_{1}, X_{2})$.
Note that the point $[1:0:0:0]$ does not lie on the hypersurface.
\end{enumerate}
Here $\zeta_{I}$ is a primitive $I$-th root of unity.
\end{proof}

\section{A viewpoint from $K3$ surfaces with automorphisms}\label{K3toGalois}
In this section, we study weighted Galois points through the lens of $K3$ surfaces and their automorphisms.
We remark that a $K3$ surface $S$ is not necessarily given as a hypersurface of $\mathbb{P}(1, 1, 1, 3)$.
If $S$ has a suitable automorphism, $S$ can be embedded into $\mathbb{P}(1, 1, 1, 3)$
as a weighted hypersurface of degree 6 with a weighted Galois point.
That embedding is a \textit{weighted Galois embedding}, which is a generalization of a Galois embedding \cite{Yoshihara-embedding}.

Let $\omega _{S}$ be a nowhere vanishing holomorphic 2-form on $S$.
An automorphism $\sigma$ of finite order $I$ on $S$ is called \textit{(purely) non-symplectic}  
if it satisfies $\sigma^{\ast} \omega _{S}=\zeta_{I}\omega _{S}$
where $\zeta_{I}$ is a primitive $I$-th root of unity. 
For background on automorphisms $K3$ surfaces, see \cite{Ni, AST, BH}.

\begin{prop}\label{aut-equa}
Let $S$ be a $K3$ surface with a non-symplectic automorphism $\sigma$.
\begin{enumerate}
\item If $\sigma$ is of order 2, and its fixed locus $S^{\sigma}$ consists of one non-singular curve of genus 10,
then $S$ is given by $Y^{2}=F_{6}(X_{0}, X_{1}, X_{2})$ in $\mathbb{P}(1,1,1,3)$ after a suitable choice of coordinates.
\item If $\sigma$ is of order 5, and its fixed locus $S^{\sigma}$ consists of one non-singular curve of genus 2 and one isolated point, 
then $S$ is given by $Y^{2}=X_{0}^{5}F_{1}(X_{1}, X_{2})+F_{6}(X_{1}, X_{2})$ in $\mathbb{P}(1,1,1,3)$ after a suitable choice of coordinates.
\item If $\sigma$ is of order 6, and its fixed locus $S^{\sigma}$ consists of one non-singular curve of genus 2, 
then $S$ is given by $Y^{2}=X_{0}^{6}+F_{6}(X_{1}, X_{2})$ in $\mathbb{P}(1,1,1,3)$ after a suitable choice of coordinates.
\end{enumerate}
\end{prop}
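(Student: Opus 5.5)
Throughout, write $C$ for the fixed curve and $\mathrm{NS}(S)$ for the N\'eron--Severi lattice of $S$; the plan is to work with the invariant sublattice $\mathrm{NS}(S)^{\sigma}$.

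\emph{Case (1).} By Nikulin's classification of non-symplectic involutions, a fixed locus consisting of a single curve of genus $10$ corresponds to $(r,a,\delta)=(1,1,1)$. Hence $\mathrm{NS}(S)^{\sigma}=\langle h\rangle$ with $h^2=2$, and $h$ is ample. Concretely, $C\sim 3h$: for a fixed curve of genus $g$, the relation $C^2=2g-2=18$ forces $C=3h$.

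The linear system $|h|$ has no base components. A fixed component would be a $(-2)$-curve $E$ with $h=kE+M$, and this is impossible when $\mathrm{NS}^{\sigma}$ has rank one and $h$ is ample. So $|h|$ defines a morphism $S\to\mathbb{P}^2$ of degree $2$. It is not a hyperelliptic-type degeneration, since $h^2=2$ and there is no elliptic pencil in the invariant lattice; this is Saint-Donat's criterion. The covering involution equals $\sigma$, because both act trivially on $|h|$.

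Next, choose sections: take $X_0,X_1,X_2\in H^0(\mathcal O(h))$ and a section $Y\in H^0(\mathcal O(3h))$ in the $(-1)$-eigenspace. Using $h^0(3h)=11=10+1$, we get a surjection $\mathbb{C}[X_0,X_1,X_2,Y]\to\bigoplus_n H^0(nh)$. The single relation in degree $6$ then reads $Y^2=F_6$, and $F_6=0$ is the branch curve. That curve is the image of $C$, which is smooth. This gives the embedding into $\mathbb{P}(1,1,1,3)$.

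\emph{Cases (2) and (3).} The idea is to reduce to (1) via a suitable power. In (3), $\tau=\sigma^3$ is a non-symplectic involution. In (2), the order-$5$ automorphism should commute with an involution; to get one, I expect to show that $\mathrm{NS}(S)^{\sigma}$ contains a class $h$ with $h^2=2$. Taking $h=C$ gives $C^2=2$ for genus $2$. The covering involution of $|C|$ is canonical, hence commutes with $\sigma$.

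More directly, in both cases $h:=[C]$ is nef with $h^2=2$, and I will argue it is base point free. So $\phi_{|h|}\colon S\to\mathbb{P}^2$ is a double cover, and its covering involution $\iota$ commutes with $\sigma$. Then $\sigma$ acts linearly on $H^0(h)=\langle X_0,X_1,X_2\rangle$. Since $C$ is pointwise fixed and is a member of $|h|$, we may diagonalize so that $C=\{X_0=0\}$ and $\sigma^*X_0=\zeta X_0$, $\sigma^*X_i=X_i$ for $i=1,2$. Here $\zeta$ is $\zeta_5$ or $\zeta_6$, possibly after composing with $\iota$ in the order-$6$ case. Running the construction of (1), the equation $Y^2=F_6$ can be chosen with $Y$ an eigenvector, so it must be $\sigma$-invariant up to the character on $Y$.

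Expanding $F_6=\sum X_0^kG_{6-k}(X_1,X_2)$, invariance kills all monomials except $k\equiv 0 \pmod{5}$ in case (2). This leaves $X_0^5F_1+F_6$, with $\sigma$ fixing $Y$ because $Y$ restricted to $C$ is nonzero and $C$ is fixed. In case (3) we need $k\equiv 0\pmod 6$, giving $aX_0^6+F_6$ with $a\neq0$ by smoothness; rescale to $a=1$. The isolated fixed point $[1:0:0:0]$ in (2) is consistent with this.

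The main obstacle is showing that $|C|$ is base point free and hyperelliptic of degree $2$ onto $\mathbb{P}^2$. By Saint-Donat, the only alternative is $C=2E+\Gamma$ with $E$ elliptic and $\Gamma$ a $(-2)$-curve. I would rule this out using the fact that $C$ is a smooth irreducible curve of genus $2$: an irreducible curve with $C^2=2>0$ gives a linear system without fixed components. Base points are then excluded unless $C=kE+\Gamma$, which requires $C$ to be reducible.
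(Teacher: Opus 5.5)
Your case (1) follows the paper's route: Nikulin gives $L(\sigma)=\langle h\rangle$ with $h^2=2$, then the section-ring count. Choosing $Y$ in the $(-1)$-eigenspace is a clean way to get $Y^2=F_6$ with no $YF_3$ term. For (2) you replace the paper's citation of \cite{AST} by the same double-plane argument you use for (3).

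\textbf{The gap.} In (2) and (3) you never show that $h=[C]$ is ample. Irreducibility of $C$ gives base-point freeness, but $\phi_{|C|}$ may still contract $(-2)$-curves disjoint from $C$. In that case your section ring produces only the singular contraction $\bar S=\{Y^2=F_6\}$, not $S$.

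\emph{Case (3) can be repaired} from the normal form. If $a=0$, the branch curve has a sixfold point at $[1:0:0]$, over which the double cover has no rational double point; so $a\neq0$. Then every singular point of $aX_0^6+F_6(X_1,X_2)=0$ lies on $X_0=0$ at a multiple root of $F_6(X_1,X_2)$, and such a root would make $C$ singular.

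\emph{Case (2) cannot be repaired} from the fixed-locus data. Take $Y^2=X_0^5X_2+X_1^5X_2+X_2^6$ from Proposition~\ref{number}. Its branch curve is the line $X_2=0$ plus the Fermat quintic, meeting in five nodes. The minimal resolution $S$ is a $K3$ surface. On it, $\sigma\colon X_0\mapsto\zeta_5X_0$ has fixed locus exactly $C\sqcup\{[1:0:0:0]\}$, since the five nodes are permuted cyclically. Moreover $L(\sigma)=\langle h,\tilde L\rangle\cong H_5$, where $\tilde L$ is the reduced preimage of $X_2=0$; let $E_1,\dots,E_5$ be the exceptional curves. An ample class $ah+b\tilde L$ needs $b=(ah+b\tilde L)\cdot E_i\geq1$ and $a-2b=(ah+b\tilde L)\cdot\tilde L\geq1$. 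This forces square $\geq22$, so there is no $\sigma$-compatible smooth model at all. Your argument therefore only yields $S$ as the minimal resolution of $Y^2=X_0^5F_1+F_6$. The paper's proof of (2) does not argue this point either; it quotes \cite{AST}.

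\textbf{Smaller points.} In (1), $|h|$ is base-point free because $h=2E+\Gamma$ would give $h\cdot\Gamma=0$, contradicting ampleness. The absence of an elliptic pencil in the invariant lattice is not the right reason, since $E$ need not be $\sigma$-invariant.

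Smoothness of the branch sextic follows from finiteness of $\phi_{|h|}$ and smoothness of $S$. So you do not need the unjustified claim that $\sigma$ acts trivially on $|h|$. That claim is true, because the induced projectivity fixes the non-linear curve $\pi(C)$ pointwise.

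Surjectivity of $\mathbb{C}[X_0,X_1,X_2,Y]\to\bigoplus_n H^0(nh)$ in all degrees comes from $\pi_*\mathcal{O}_S=\mathcal{O}_{\mathbb{P}^2}\oplus\mathcal{O}_{\mathbb{P}^2}(-3)$, not from the degree-$3$ count alone.

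In (3), drop ``possibly after composing with $\iota$'': composing with $\iota$ would destroy the pointwise fixing of $C$. The correct reason $\tilde\sigma$ has order exactly $6$ is that $\iota|_C$ is the hyperelliptic involution of $C$, so no power of $\sigma$ equals $\iota$. Equivalently, your normalization $\sigma^*X_1=X_1$, $\sigma^*X_2=X_2$, $\sigma^*Y=Y$ makes $\sigma$ act as $\mathrm{diag}(\zeta,1,1,1)$, so $\zeta$ has the order of $\sigma$.
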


\begin{proof}
\begin{enumerate}
\item The rank of the invariant lattice $L(\sigma)=\{x \in H^{2}(S, \mathbb{Z}) \mid \sigma^{\ast}(x)=x\}$ is 1,
and its generator $h$ satisfies $h^{2}=2$ by \cite[Theorem 4.2.2]{Ni}.
The linear system $|h|$ defines a morphism $\pi :S \to \mathbb{P}^{2}$ of degree 2, whose branch locus
is a sextic curve in $\mathbb{P}^{2}$.
In the following, we study a weighted Galois embedding.

We consider the section ring $R(S, h):=\bigoplus_{m\geq 0}H^{0}(S,\mathcal{O}_{S}(mh))$.
Note that $\dim H^{0}(S,\mathcal{O}_{S}(mh))=m^{2}+2$ holds by the Riemann-Roch theorem and the Kodaira vanishing theorem.
Since the dimension of the symmetric power $\Sym^{3}(H^{0}(S,\mathcal{O}_{S}(h)))$ is 10, 
there exists $Y \in H^{0}(S,\mathcal{O}_{S}(3h))$ that cannot be expressed in terms of the basis
$\{X_{0}, X_{1}, X_{2}\}$ of $H^{0}(S,\mathcal{O}_{S}(h))$.

It is easy to see that the dimension of
the part $\mathbb{C}[X_{0}, X_{1}, X_{2}, Y]_{m}$ of degree $m (\leq 6)$ in $\mathbb{C}[X_{0}, X_{1}, X_{2}, Y]$
is given by
\[ \dim \mathbb{C}[X_{0}, X_{1}, X_{2}, Y]_{m}
=\begin{cases}
\binom{m+2}{2}+\binom{m-1}{2} & m \leq 5\\
\binom{m+2}{2}+\binom{m-1}{2}+1 &m=6
\end{cases}.\]
Since $\dim \mathbb{C}[X_{0}, X_{1}, X_{2}, Y]_{6}-\dim H^{0}(S,\mathcal{O}_{S}(6h))=1$ holds, 
we have a relation $Y^{2}=F_{6}(X_{0}, X_{1}, X_{2})$, hence
we have $R(S, h) \simeq \mathbb{C}[X_{0}, X_{1}, X_{2},Y]/(Y^{2}-F_{6}(X_{0}, X_{1}, X_{2}))$.
This implies that 
\[ S=\Proj R(S, h) \subset \Proj \mathbb{C}[X_{0}, X_{1}, X_{2},Y]=\mathbb{P}(1, 1, 1, 3). \]
This is a weighted Galois embedding.

\item It follows from \cite[P. 520 \fbox{$S(\sigma)\cong H_{5}$}]{AST}.

\item Since there exists a fixed curve $C$ of genus 2, 
the morphism $\pi : S \to \mathbb{P}^{2}$ associated to the linear system $|C|$ is 
a double covering of $\mathbb{P}^{2}$ branched along a smooth plane sextic curve $B$ 
(See also \cite[Proposition VIII.13]{Be}).
Thus, after a suitable choice of coordinates, 
we may assume that $S$ is given by $Y^{2}=F_{6}(X_{0}, X_{1}, X_{2})$ in $\mathbb{P}(1,1,1,3)$.

Since $\sigma$ fixes $C$, it preserves the linear system $|C|$ which defines $\pi$.
Thus $\sigma$ induces a projective transformation $\tilde{\sigma}$ of $\mathbb{P}^{2}$.
Since the fixed locus of $\sigma$ is exactly $C$, 
there exists a line $H$ in $\mathbb{P}^{2}$ such that $\pi^{-1}(H)=C$.
Moreover $\tilde{\sigma}$ fixes $H$ pointwise. 
By replacing coordinates of $\mathbb{P}^{2}$ such that $H=\{X_{0}=0\}$, 
we may assume that 
$\tilde{\sigma}$ satisfies $\tilde{\sigma}([X_{0}:X_{1}:X_{2}])=[\zeta_{6}X_{0}:X_{1}:X_{2}]$.
Thus we find that its equation is of the form $Y^{2}=aX_{0}^{6}+F_{6}(X_{1}, X_{2})$.
The smoothness of $B$ implies $a\neq 0$. After rescaling $X_0$, we therefore obtain
$Y^{2}=aX_{0}^{6}+F_{6}(X_{1}, X_{2})$.
\end{enumerate}
\end{proof}

\begin{rem}
A non-symplectic automorphism $\sigma_{I}$ of order $I$ is characterized in terms of invariants of $L(\sigma_{I})$.
In the cases (1) through (3) above, we have $L(\sigma_{2})=\langle 2 \rangle$, $L(\sigma_{5})=H_{5}$ and 
$L(\sigma_{6})=\langle 2 \rangle$.
\end{rem}

\begin{cor}
The following statements hold:
\begin{enumerate}
\item If a $K3$ surface has a non-symplectic involution whose fixed locus consists of one smooth curve of genus 10
then by a suitable coordinate change, it can be embedded in $\mathbb{P}(1,1,1, 3)$ 
as a smooth weighted hypersurface of degree 6 with a weighted Galois point.

\item If a $K3$ surface has a non-symplectic automorphism of order 5 
whose fixed locus consists of one smooth curve of genus 2 and one isolated point
then by a suitable coordinate change, it can be embedded in $\mathbb{P}(1,1,1, 3)$ 
as a smooth weighted hypersurface of degree 6 with an inner weighted Galois point.

\item If a $K3$ surface has a non-symplectic automorphism of order 6 
whose fixed locus consists of one smooth curve of genus 2
then by a suitable coordinate change, it can be embedded in $\mathbb{P}(1,1,1, 3)$ 
as a smooth weighted hypersurface of degree 6 with at least two outer weighted Galois points.
\end{enumerate}
\end{cor}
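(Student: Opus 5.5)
The plan is to combine Proposition \ref{aut-equa}, which puts $S$ into normal form, with Proposition \ref{gm1}, which produces the required weighted Galois points. In each case we first need $S$ to be smooth in $\mathbb{P}(1,1,1,3)$.

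For (1), Proposition \ref{aut-equa} (1) realizes $S=\Proj R(S,h)$ as $Y^{2}=F_{6}(X_{0},X_{1},X_{2})$. Here $h$ is the generator of $L(\sigma)$ with $h^{2}=2$, and $|h|$ gives the double cover $\pi:S\to\mathbb{P}^{2}$. We must also check that the branch sextic $F_{6}=0$ is nonsingular. The fixed curve of $\sigma$ is smooth of genus $10$, and it maps isomorphically onto the branch curve, since $\sigma$ is the covering involution of $\pi$. A plane sextic of arithmetic genus $10$ whose normalization has genus $10$ is smooth. Smoothness of the sextic implies that $S$ is a smooth weighted hypersurface: the point $[0:0:0:1]$ does not lie on $S$, and elsewhere the usual Jacobian criterion applies. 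Proposition \ref{gm1} (1) then shows that $[0:0:0:1]$ is an (outer) weighted Galois point.

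For (2), we invoke Proposition \ref{aut-equa} (2), i.e.\ the cited description from \cite{AST}, to write $S$ as $Y^{2}=X_{0}^{5}F_{1}(X_{1},X_{2})+F_{6}(X_{1},X_{2})$. The surface $S$ is smooth because it is isomorphic to the given (smooth) $K3$ surface. Alternatively, one checks directly that this equation defines a smooth hypersurface exactly when the relevant binary forms have no common root and $F_{6}$ is squarefree, and these conditions are forced by $S$ being a smooth $K3$. The point $[1:0:0:0]$ satisfies the equation, since every term vanishes at $X_{1}=X_{2}=Y=0$. Proposition \ref{gm1} (2) then shows that it is an inner weighted Galois point.

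For (3), Proposition \ref{aut-equa} (3) gives $Y^{2}=aX_{0}^{6}+F_{6}(X_{1},X_{2})$ with $a\neq0$. After rescaling $X_{0}$ by a sixth root of $a$ we get $Y^{2}=X_{0}^{6}+F_{6}(X_{1},X_{2})$. Smoothness again holds because the branch sextic is smooth, as arranged in the proof of Proposition \ref{aut-equa}. We then exhibit two distinct outer weighted Galois points. The point $[0:0:0:1]$ works by Proposition \ref{gm1} (1). The point $[1:0:0:0]$ works by Proposition \ref{gm1} (3), and it lies outside $S$ because $1\neq 0$. The two points are distinct, giving at least two.

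The main obstacle is the smoothness verification. In (1) it requires identifying the fixed curve with the branch curve so that the genus count applies. In (2) it depends on trusting that the normal form from \cite{AST} is taken for a smooth $K3$, and hence is smooth in the weighted sense.
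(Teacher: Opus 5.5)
Your proposal is correct and follows the paper's argument. The paper proves the corollary by combining the normal forms of Proposition \ref{aut-equa} with the Galois points supplied by Proposition \ref{gm1}, which is exactly what you do in each case. Your smoothness verifications are sound additions that the paper leaves implicit. The one step you assert without justification is that $\sigma$ is the covering involution of $\pi$ in (1). It holds because $\sigma^{*}h=h$, so $\sigma$ descends to a projective transformation of $\mathbb{P}^{2}$ that fixes pointwise the irreducible curve $\pi(C)$. Since $\pi(C)$ is not a line, that transformation is the identity.
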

\begin{proof}
These follow from Proposition \ref{gm1} and Proposition \ref{aut-equa}.
\end{proof}

\appendix
\section{Weighted Galois points for algebraic curves}\label{curve}

In this section, we treat double covers of $\mathbb{P}^{1}$.
We assume that all the curves which we consider are smooth.

It is well known that the canonical model of an algebraic curve of genus 2 is
given by a double cover of $\mathbb{P}^{1}$ branched at 6 points, i.e., 
weighted hypersurface of degree 6 in $\mathbb{P}(1,1,3)$.
By a similar argument as in Section \ref{tentohouteishiki}, we obtain the following:

\begin{prop}
Let $C$ be a smooth weighted hypersurface of degree 6 in $\mathbb{P}(1,1, 3)$
 defined by $Y^{2}=F_{6}(X_{0}, X_{1})$.
The following statements hold:
\begin{enumerate}
\item $C$ has at least one outer weighted Galois point $[0:0:1]$.
\item The number of inner weighted Galois points for $C$ is either 0 or 1.
If $C$ has such a point then 
$F_{6}(X_{0}, X_{1})=X_{0}^{5}X_{1}+X_{1}^{6}$
holds up to projective equivalence.
Moreover, the inner weighted Galois point is $[1:0:0]$.
\item The number of outer weighted Galois points for $C$ is 1 or 3.
If $C$ has 3 such points then 
$F_{6}(X_{0}, X_{1})=X_{0}^{6}+X_{1}^{6}$
holds up to projective equivalence.
Moreover, the outer weighted Galois points are $[1:0:0]$ and $[0:1:0]$.
\end{enumerate}
Here $F_{d}$ is a homogeneous polynomial of degree $d$.
\end{prop}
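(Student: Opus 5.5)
We mirror Section \ref{tentohouteishiki}, with $\mathbb{P}(1,1,3)$ in place of $\mathbb{P}(1,1,1,3)$. Weighted projections now go either to $\mathbb{P}^{1}$ (centered at $[0:0:1]$) or to $\mathbb{P}(1,3)\cong\mathbb{P}^{1}$ (forgetting one $X_i$ after a coordinate change).

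\textbf{Step 1: the point $[0:0:1]$.} The argument of Proposition \ref{gm1}(1) applies verbatim. The field $\mathbb{C}(C)=\mathbb{C}(x)(y)$ with $y^{2}=f_{6}(x)$ is a quadratic extension of $\mathbb{C}(x)$ containing both roots $\pm y$, so it is Galois and $[0:0:1]$ is an outer weighted Galois point. This gives (1).

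\textbf{Step 2: the group and the normal form.} Let $P\neq[0:0:1]$ be a weighted Galois point. As in the Remark, move it to $[1:0:0]$. The degree of $\pi_P$ is $5$ if $P\in C$ and $6$ otherwise, since projecting away from $X_0$ sees the degree of $F_6$ in $X_0$.

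The analogue of Lemma \ref{extend} applies. Here $\mathcal{O}_C(1)$ is the $g^1_2$ (canonical) class, which is intrinsic, so $G_P$ acts linearly on $\langle X_0,X_1\rangle$ and by $Y\mapsto aY+F_3$. Moreover $G_P$ fixes the fibre coordinate $[X_1:Y]$. Then the argument of Proposition \ref{sgg}(3), via the action on an affine line, shows that $G_P$ is cyclic and generated by $X_0\mapsto\zeta X_0+cX_1$, which can be diagonalised to $X_0\mapsto\zeta X_0$.

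Invariance of $F_6$ then forces the following normal forms.
\begin{itemize}
\item In the inner case, $F_6=X_0^5\,aX_1+bX_1^6$ with $a\neq0$ by smoothness. Scaling gives $X_0^5X_1+X_1^6$.
\item In the outer case, $F_6=X_0^6+bX_1^6$ with $b\neq0$. Scaling gives $X_0^6+X_1^6$.
\end{itemize}
Thus, unlike the surface case, the genus $2$ curve is determined uniquely in each case. The existence direction follows exactly as in Proposition \ref{gm1}(2),(3).

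\textbf{Step 3: counting.} In the inner case the candidates other than $[1:0:0]$ on $C$ would have to arise from another order $5$ automorphism of $\mathbb{P}^1$ preserving the six branch points $\{x_0^5x_1+x_1^6=0\}=\{\infty\}\cup\{\text{fifth roots of }-1\}$. We would argue as in Proposition \ref{number}(1) that no other point of $C$ is Galois. Alternatively, note that a second inner point would need a second fixed point of an order $5$ rotation lying among the six branch points, in a configuration again of the shape $\{\text{pt}\}\cup\{5\text{ points}\}$. Checking this configuration shows only $[1:0:0]$ works, which gives ``0 or 1''.

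For outer points, the branch set is $\{x^6=-1\}$, a regular hexagon. An order $6$ rotation of $\mathbb{P}^1$ preserving it must have fixed points $0$ and $\infty$. So the outer points other than $[0:0:1]$ are exactly those whose projection fixes $X_0=0$ or $X_1=0$, namely $[1:0:0]$ and $[0:1:0]$. Together with $[0:0:1]$ this gives $3$, and otherwise $1$. The point $2$ is excluded because a single extra point forces the normal form $X_0^6+X_1^6$, which already has the symmetric third point.

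\textbf{Main obstacle.} The delicate part is the counting in Step 3. One must check that distinct centers in $\mathbb{P}(1,1,3)$ (including the $Y$-shift freedom from the Remark) give distinct points, and use the finite automorphism group of the branch configuration in $PGL_2$ to rule out further cyclic subgroups of order $5$ or $6$ with the required fixed-point structure.
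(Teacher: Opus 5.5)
Your route is the one the paper intends; its proof is just ``a similar argument as in Section~2''. It breaks at the degree claim in Step~2: that $\deg\pi_P=5$ for $P\in C$ ``since projecting away from $X_0$ sees the degree of $F_6$ in $X_0$''.

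That claim holds only when moving $P$ to $[1:0:0]$ needs no $X_0^3$-shift of $Y$, i.e.\ when $P$ is a Weierstrass point. Take instead $P=[1:0:b]$ with $b\neq0$. The admissible coordinate change is $Y'=Y-bX_0^3-g_1X_0^2X_1-g_2X_0X_1^2$. The new equation $(Y'+bX_0^3+g_1X_0^2X_1+g_2X_0X_1^2)^2=F_6$ acquires a cross term $2bX_0^3Y'$. The fibres of $\pi_P=[X_1:Y']$ are the curves $Y'=tX_1^3$, and all of them pass through $P$. Choose $g_1,g_2$ so that $b+g_1u+g_2u^2$ matches $\sqrt{F_6(1,u)}$ modulo $u^3$. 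Then $Y'$ vanishes to order $\ge 3$ at $P$, a general fibre meets $C$ with multiplicity $3$ at $P$, and $\deg\pi_P=3$.

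You mention the $Y$-shift only as bookkeeping in Step~3. In fact it changes the degree, the Galois group and the shape of the equation. So your ``invariance of $F_6$'' normal-form argument, and the order-$5$ group, are unjustified for such $P$.

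Part (2) is in fact false under the paper's definition, so no argument can establish it as stated. Let $C:Y^2=X_0^6+X_1^6$, $P=[1:0:1]\in C$, and $\varphi:Y\mapsto Y-X_0^3$. Then $\varphi(P)=[1:0:0]$ and $\pi_P=[X_1:Y-X_0^3]$. Put $x=X_0/X_1$ and $t=(Y-X_0^3)/X_1^3$. The relation $(t+x^3)^2=x^6+1$ gives $x^3=(1-t^2)/(2t)$. Hence $\mathbb{C}(C)=\mathbb{C}(t)(x)$ is a cyclic cubic Kummer extension, with Galois group generated by $X_0\mapsto\zeta_3X_0$.

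The same works at $[1:0:-1]$ and $[0:1:\pm1]$. So this $C$ has at least four inner weighted Galois points. It is also not equivalent to $Y^2=X_0^5X_1+X_1^6$, since it has no automorphism of order $5$.

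The identical gap sits in Proposition~\ref{sgg}(2), on which the paper's ``similar argument'' rests. For example, $[1:0:0:1]$ on $Y^2=X_0^6+X_1^6+X_2^6$, with $Y\mapsto Y-X_0^3$, gives a $\mathbb{Z}/3\mathbb{Z}$-Galois projection.

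Parts (1) and (3) do survive. For $P\notin C$ with $P\neq[0:0:1]$, the degree is always $6$. Also $G_P$ cannot contain the hyperelliptic involution, because $Y'$ has a nonzero $Y$-coefficient. Hence $G_P$ embeds in $PGL_2$, fixes the image of $\{X_1=0\}$, and is therefore cyclic. You still have to exclude shifted centres $[1:0:c]$ with $c\neq0$, for instance by an eigenspace computation on $H^0(C,\mathcal{O}_C(3))$. Your Step~3 omits this.
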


An algebraic curve of genus 1 is given as a cubic curve in $\mathbb{P}^{2}$,
but it can also be given as a double cover of $\mathbb{P}^{1}$ branched at 4 points.

\begin{lem}
Let $C$ be a smooth cubic curve in $\mathbb{P}^{2}$.
\begin{enumerate}
\item All points in $C$ are inner Galois points.
\item $C$ has an outer Galois point if and only if the $j$-invariant of $C$ is 0.
\end{enumerate}
\end{lem}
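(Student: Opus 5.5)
We prove both parts using classical Galois points of plane curves: a point $P\in\mathbb{P}^2$ with the linear projection $\pi_P\colon C\dashrightarrow\mathbb{P}^1$ inducing a Galois extension of function fields. We work in affine coordinates and use that a smooth cubic has genus $1$, so it is a group under a chosen flex origin $O$. Three points are collinear exactly when they sum to $0$.

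For (1), let $P\in C$. The projection $\pi_P$ has degree $2$, since a line through $P$ meets $C$ in two further points, counted with multiplicity. Every degree-$2$ extension in characteristic $0$ is Galois, so $P$ is an inner Galois point. To make the Galois group explicit, the residual map $Q\mapsto -P-Q$ is an involution of $C$ commuting with $\pi_P$, and it generates $G_P$. Note that $P$ itself goes to the tangent line, so $\pi_P$ extends to a morphism of degree $2$.

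For (2), let $P\notin C$, so $\pi_P$ has degree $3$. Suppose first that $P$ is an outer Galois point. Then $G_P\cong\mathbb{Z}/3\mathbb{Z}$ acts on $C$. A generator $\sigma$ permutes the three points of $C$ on each line through $P$ transitively. By the argument of Lemma \ref{extend}, applied to $\mathcal{O}_C(1)$, $\sigma$ is induced by a projective linear map of $\mathbb{P}^2$ fixing $P$ and every line through $P$; hence it is a homology with center $P$ and some axis $\ell$. The quotient $C\to C/G_P\cong\mathbb{P}^1$ is cyclic of degree $3$. By Riemann--Hurwitz, $0=3(-2)+2r$, so $r=3$ and $\sigma$ has fixed points. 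Therefore $\sigma$ is not a translation. Its differential must be a primitive cube root of unity, since an automorphism of an elliptic curve with fixed points and order $3$ acts on $H^0(\Omega^1)$ by such a root. This forces $\mathrm{Aut}(C,O')$ to contain $\mathbb{Z}/3\mathbb{Z}$, which happens only when $j=0$.

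Conversely, suppose $j(C)=0$. Then $C$ is projectively equivalent to $X^3+Y^3+Z^3=0$, or to $y^2z=x^3+z^3$. For the Fermat cubic take $P=[0:0:1]$. The projection $[X:Y:Z]\mapsto[X:Y]$ gives a function field that is a Kummer extension $\mathbb{C}(x)(z)$ with $z^3=-(1+x^3)$ after dehomogenizing. It contains the conjugates $\zeta_3^i z$, so the extension is Galois, and $P\notin C$.

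The main obstacle is the step that forces $j=0$. There we need the Galois group to act by projective automorphisms with the right fixed-point behaviour, and I would handle it by quoting the $\mathcal{O}(1)$-linearization argument of Lemma \ref{extend} together with Riemann--Hurwitz as above.
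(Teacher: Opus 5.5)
Your proof is correct and follows the same outline as the paper's proof: degree two gives (1), an order-three element of $G_P$ forces $j=0$, and an explicit $j=0$ model gives the converse (you use the Fermat cubic with $P=[0:0:1]$, the paper uses $X_1^2X_2=X_0^3+BX_2^3$ with $P=[1:0:0]$). Your Riemann--Hurwitz step improves on the paper, whose one-line ``a generator of $G_P$ acts as an automorphism, hence $j=0$'' overlooks that every elliptic curve has order-three automorphisms (translations by $3$-torsion points); your argument that $C/G_P\cong\mathbb{P}^1$ forces three fixed points is what rules these out, and the projective-linearization step you borrow from the paper's lemma is not actually needed.
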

\begin{proof}
\begin{enumerate}
\item Let $\pi_{P}:C \to \mathbb{P}^{1}$ be a projection from a point $P \in C$.
It induces the extension 
$\mathbb{C}(C)/\pi_{P}^{\ast}\mathbb{C}(\mathbb{P}^{1})$ of function fields, and
its degree $[\mathbb{C}(C):\pi_{P}^{\ast}\mathbb{C}(\mathbb{P}^{1})]$ is equal to $\deg \pi_{P}$.
Since the point $P$ lies on $C$, we have $\deg \pi_{P}=2$.
Thus $P$ is a Galois point.

\item Let $G_{P}$ be the Galois group of the Galois extension 
$\mathbb{C}(C)/\pi_{P}^{\ast}\mathbb{C}(\mathbb{P}^{1})$.
If $P$ is an outer Galois point then the order of $G_{P}$ is 3.
Since a generator of $G_{P}$ acts on $C$ as an automorphism,
the $j$-invariant of $C$ is 0. See also \cite[Chapter III, Theorem 10.1]{Sil}.

Conversely, if the $j$-invariant of $C$ is 0 then $C$ is given by 
$X_{1}^{2}X_{2}=X_{0}^{3}+BX_{2}^{3}$.
The point $[1:0:0]$ is an outer Galois point for $C$.
\end{enumerate}
\end{proof}

\begin{prop}
Let $C$ be a smooth weighted hypersurface of degree 4 in $\mathbb{P}(1,1, 2)$
 defined by $Y^{2}=F_{4}(X_{0}, X_{1})$.
The following statements hold:
\begin{enumerate}
\item $C$ does not necessarily have an inner weighted Galois point.
\item $C$ has at least one outer weighted Galois point $[0:0:1]$.
\end{enumerate}
\end{prop}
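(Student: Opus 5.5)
For (2) I would reuse the argument of Proposition \ref{gm1}(1). The point $[0:0:1]$ does not lie on $C$: substituting $X_0=X_1=0$ into $Y^{2}=F_{4}(X_{0},X_{1})$ gives $Y^2=0$, so $Y=0$, which is impossible for a point whose only nonzero coordinate is $Y$. The weighted projection $\pi:[X_0:X_1:Y]\mapsto[X_0:X_1]$ from $[0:0:1]$ restricts to the double cover $C\to\mathbb{P}^1$. Put $x:=X_1/X_0$ and $y:=Y/X_0^2$. Then $\mathbb{C}(C)=\mathbb{C}(x)(y)$ with $y^2=f_4(x)$, where $f_4(x)=F_4(1,x)$. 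The minimal polynomial of $y$ over $\mathbb{C}(x)$ is $T^2-f_4(x)$, and its other root $-y$ also lies in $\mathbb{C}(C)$. Hence the extension is Galois, its group is generated by $Y\mapsto -Y$, and $[0:0:1]$ is an outer weighted Galois point.

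For (1), the statement only says that an inner point need not exist, so it suffices to exhibit one curve $C$ with no inner weighted Galois point. I would first adapt Lemma \ref{extend} and Proposition \ref{sgg}(2). Let $P\in C$ with $P\neq[0:0:1]$. After a coordinate change we may assume $P=[1:0:0]$, so $F_4$ has no $X_0^4$ term. The projection $\rho_0:[X_0:X_1:Y]\mapsto[X_1:Y]$ goes to $\mathbb{P}(1,2)\cong\mathbb{P}^1$. On $C$ its degree is $3$, since a fibre $Y=cX_1^2$ meets $C$ in $4$ points, one of which is $P$. As in Lemma \ref{extend}, every element of the Galois group $G_P$ extends to an automorphism of $\mathbb{P}(1,1,2)$ preserving $\mathcal{O}(1)$. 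An element fixing all fibres fixes $X_1$ and $Y$ up to the weighting, so it acts by $X_0\mapsto \alpha X_0+\beta X_1$, and we may take $Y\mapsto Y+\gamma X_1^2$.

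Galois of degree $3$ then means there is an order-$3$ automorphism of this shape preserving $C$. Diagonalising (possible after adjusting $P$ within the fibre), it becomes $X_0\mapsto\zeta_3X_0$ with $X_1,Y$ fixed. Invariance of $Y^2=F_4$ then forces $F_4=aX_0^3X_1+bX_1^4$, so the branch points are $X_1=0$ together with the three roots of $aX_0^3+bX_1^3=0$. Thus the four branch points are $0$ and the cube roots of a constant, and these have cross-ratio $-\zeta_3$, i.e.\ $j=0$. The example is therefore any $F_4$ with four distinct roots and $j\neq 0$, for instance $F_4=X_0X_1(X_0-X_1)(X_0-2X_1)$; such a curve has no inner weighted Galois point. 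The point $[0:0:1]$ itself is excluded because it is not on $C$.

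The main obstacle is the step claiming that $G_P$ is generated by a diagonalisable order-$3$ map with the stated action on $X_1$ and $Y$, that is, transplanting Lemma \ref{extend} and Proposition \ref{sgg}. The most direct fallback is purely group-theoretic. A Galois degree-$3$ cover $C\to\mathbb{P}^1$ by a genus-$1$ curve has group $\mathbb{Z}/3$ acting with a fixed point, and by Riemann--Hurwitz it has $3$ fixed points. An automorphism of order $3$ with fixed points on an elliptic curve forces $j=0$ \cite[Chapter III, Theorem 10.1]{Sil}. This conclusion alone already gives (1) for any $C$ with $j\neq0$.
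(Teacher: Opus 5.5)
Your proposal is correct under the same tacit convention the paper uses (see the second paragraph), and for (1) it goes well beyond the paper's proof.

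For (2) you do exactly what the paper does, namely the argument of Proposition \ref{gm1}(1). For (1) the paper only asserts that the projection from $P\in C$ has degree $3$ and that this extension is ``in general'' not Galois. It gives neither a reason nor an example. You supply both, and your Riemann--Hurwitz fallback is the argument to keep. A Galois degree-$3$ cover $C\to\mathbb{P}^1$ with $g(C)=1$ has group $\mathbb{Z}/3\mathbb{Z}$ with exactly three fixed points. Choosing one as origin gives an order-$3$ automorphism of the elliptic curve, so $j=0$ by \cite[Chapter III, Theorem 10.1]{Sil}, which is the same input the paper uses for outer points of cubics. Hence $Y^2=X_0X_1(X_0-X_1)(X_0-2X_1)$, with $j=1728$, is an explicit curve without inner weighted Galois points. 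This route needs no analogue of Lemma \ref{extend}, unlike your first one, so lead with it. What you gain over the paper is an actual proof and a concrete example.

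One step is left unjustified by both you and the paper: $\deg\pi_P=3$. Your normalisation ``$P=[1:0:0]$, so $F_4$ has no $X_0^4$ term'' keeps the shape $Y^2=F_4$. That is possible only when $P$ is one of the four branch points. There the general fibre of any weighted projection from $P$ is transversal to $C$ at $P$, and the degree is indeed $3$.

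Now take $P=[1:0:c]$ with $c\neq0$ and $F_4=c^2X_0^4+f_1X_0^3X_1+\cdots$. The definition also allows the coordinate change $Y'=Y-cX_0^2-\frac{f_1}{2c}X_0X_1$. The fibres $Y'=tX_1^2$ are then tangent to $C$ at $P$. Substituting $Y=cX_0^2+\frac{f_1}{2c}X_0X_1+tX_1^2$ into the equation of $C$ leaves $X_1^2$ times a binary quadratic form. So this weighted projection has degree $2$ and is therefore Galois. For instance, $Y^2=X_0^4+X_1^4$, $P=[1:0:1]$, $Y'=Y-X_0^2$ gives $X_1^2\bigl(2tX_0^2+(t^2-1)X_1^2\bigr)=0$.

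So under the literal definition (``there exists a weighted projection''), every non-branch point would be an inner weighted Galois point, and (1) would fail. You should state the convention the paper uses tacitly: the projection from $P\in C$ is taken with general fibre transversal to $C$ at $P$, i.e.\ of degree $3$. With that made explicit, your fallback argument is a complete proof.
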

\begin{proof}
If a point $P$ lies on $C$ then the degree of the field extension
$\mathbb{C}(C)/\pi_{P}^{\ast}\mathbb{C}(\mathbb{P}^{1})$ is equal to $\deg \pi_{P}=3$.
In general, this is not Galois.

The same argument as in Section \ref{tentohouteishiki} shows (2).
\end{proof}

This implies that automorphisms of elliptic curves do not necessarily preserve Galois point data.
This is precisely where the importance of ``Galois embeddings'' lies.

\end{document}